\newcommand{\CP}[1]{\mathbb{C}P^{#1}}      %complex projective spaces
\begin{document}
\raggedbottom
\pagestyle{myheadings}
%\markboth{\centerline{...}}
%\markboth{Mikl\'os Eper, Szil\'ard Szab\'o}{Geometric P=W conjecture in rank three case \dots}
\title
{\textsc{Rank three representations of Painlev\'e systems: II. de Rham structure, Fourier--Laplace transformation}}

\author{by \textsc{Mikl\'os Eper}\footnote{\textsl{Department of Algebra and Geometry, Institute of Mathematics, Faculty of Natural Sciences, Budapest University of Technology and
Economics, M\H uegyetem rkp. 3., Budapest H-1111, Hungary}, e-mail: \href{mailto:epermiklos@gmail.com}{epermiklos@gmail.com}} and \textsc{Szil\'ard Szab\'o}\footnote{Institute of Mathematics, Faculty of Science, E\"otv\"os Lor\'and University, P\'azm\'any P\'eter s\'et\'any 1/C, Budapest, Hungary, H-1117; HUN-REN Alfr\'ed R\'enyi Institute of Mathematics,
Re\'altanoda utca 13-15., Budapest 1053, Hungary, e-mails:\href{mailto:szilard.szabo@ttk.elte.hu}{szilard.szabo@ttk.elte.hu} and \href{mailto:szabo.szilard@renyi.hu}{szabo.szilard@renyi.hu}}
\orcidlink{https://orcid.org/my-orcid?orcid=0009-0008-7435-2021}}

\begin{comment}
\author{by \textsc{Mikl\'os Eper}\footnote{\textsl{Department of Algebra and Geometry, Institute of Mathematics, Faculty of Natural Sciences, Budapest University of Technology and
Economics, M\H uegyetem rkp. 3., Budapest H-1111, Hungary}, e-mail: \href{mailto:epermiklos@gmail.com}{epermiklos@gmail.com}} and \textsc{Szil\'ard Szab\'o}\footnote{Institute of Mathematics, Faculty of Science, E\"otv\"os Lor\'and University, P\'azm\'any P\'eter s\'et\'any 1/C, Budapest, Hungary, H-1117; Alfr\'ed R\'enyi Institute of Mathematics,
Re\'altanoda utca 13-15., Budapest 1053, Hungary, e-mails:\href{mailto:szilard.szabo@ttk.elte.hu}{szilard.szabo@ttk.elte.hu} and \href{mailto:szabo.szilard@renyi.hu}{szabo.szilard@renyi.hu}}
\orcidlink{https://orcid.org/my-orcid?orcid=0009-0008-7435-2021}}

\author[1]{\textsc{Mikl\'os Eper}}
\author[2]{\textsc{Szil\'ard Szab\'o}}
\affil[1]{\textsl{(Department of Algebra and Geometry, Institute of Mathematics, Faculty of Natural Sciences, Budapest University of Technology and
Economics, M\H uegyetem rkp. 3., Budapest H-1111, Hungary)}}
\affil[2]{\textsl{(Department of Algebra and Geometry, Institute of Mathematics, Faculty of Natural Sciences, Budapest University of Technology and
Economics, M\H uegyetem rkp. 3., Budapest H-1111, Hungary; Alfr\'ed R\'enyi Institute of Mathematics,
Re\'altanoda utca 13-15., Budapest 1053, Hungary)}}
\end{comment}
\maketitle

%%%%%%%%%%%%%%%%%%%%%%%%%%%%%%%%%%%%%%%%%%%%

\begin{abstract}
We use formal microlocalization to describe the Fourier--Laplace transformation between rank 3 and rank 2 $\mathcal{D}$-module representations of Painlev\'e systems. We conclude the existence of biregular morphism between the corresponding de Rham complex structures. 
\end{abstract}

%%%%%%%%%%%%%%%%%%%%%%%%%%%%%%%%%%%%%%%%%%%
\newtheorem{theorem}{Theorem}[section]
\newtheorem{corollary}[theorem]{Corollary}
\newtheorem{conjecture}{Conjecture}[section]
\newtheorem{lemma}[theorem]{Lemma}
\newtheorem{exmple}[theorem]{Example}
\newtheorem{defn}[theorem]{Definition}
\newtheorem{prop}[theorem]{Proposition}
\newtheorem{rmrk}[theorem]{Remark}
\newtheorem{claim}[theorem]{Claim}
\newtheorem{assumption}[theorem]{Assumption}

            %%% for no-italic, numbered environments, use:
\newenvironment{definition}{\begin{defn}\normalfont}{\end{defn}}
\newenvironment{remark}{\begin{rmrk}\normalfont}{\end{rmrk}}
\newenvironment{example}{\begin{example}\normalfont}{\end{example}}
            %%% for unnumbered environments, use f.e.
\newenvironment{acknowledgement}{{\bf Acknowledgement:}}

\newcommand\restr[2]{{% we make the whole thing an ordinary symbol
  \left.\kern-\nulldelimiterspace % automatically resize the bar with \right
  #1 
  \vphantom{\big|} % pretend it's a little taller at normal size
  \right|_{#2} % this is the delimiter
  }}

%%%%%%%%%%%%%%%%%%%%%%%%%%%%%%%%%%%%%%%%%%%%%%%%%%%%%%%%%%

\section{Introduction}

Recently~\cite{ESz2}, we have studied the wild character varieties associated to the rank $3$ Lax representations of the Painlev\'e equations found by Joshi, Kitaev and Treharne (JKT)~\cite{JKTI},~\cite{JKTII}. 
The aim of the present article is to offer a comprehensive description of the same correspondence, this time between the corresponding de Rham moduli spaces of irregular meromorphic connections, in terms of Fourier--Laplace transformation of $\mathcal{D}$-modules. 
Our main result can be summarized as: 
\begin{theorem}[Proposition~\ref{prop:dRisomorphism}]\label{thm:FLisom}
Fourier--Laplace transformation $\mathcal{F}$ of $\mathcal{D}$-modules induces a biregular morphism between the moduli spaces of rank $2$ irregular connections associated to the Painlev\'e equations and the corresponding JKT moduli spaces of irregular connections of rank $3$.  
\end{theorem}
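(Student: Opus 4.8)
The plan is to exhibit $\mathcal{F}$ fibrewise as a transformation that preserves every piece of structure defining the two moduli problems, and then to invert it.

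\emph{Normalization.} First I would fix, once and for all, the presentation of the two sides as moduli of meromorphic connections on $\mathbb{P}^1$: on the Painlev\'e side, the rank $2$ connections with the prescribed polar divisor and formal local types recalled in~\cite{ESz2}; on the JKT side, the rank $3$ connections of~\cite{JKTI},~\cite{JKTII} with their polar divisor and local types. One must also pin down which point plays the role of $\infty$ on each side, so that the operation relating them is literally the Fourier--Laplace transform $\mathcal{F}$ of algebraic $\mathcal{D}$-modules on $\mathbb{A}^1$ with kernel $e^{xy}$, rather than some twist of it.

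\emph{Local matching via formal microlocalization.} The key step is to apply the local Fourier transform functors $\mathcal{F}^{(0,\infty)}$, $\mathcal{F}^{(\infty,0)}$, $\mathcal{F}^{(\infty,\infty)}$ (stationary phase; Bloch--Esnault, Garc\'ia L\'opez, Sabbah) to each formal germ of a rank $2$ Painlev\'e connection, and to check that the result is exactly the formal germ of the associated rank $3$ JKT connection: ranks, slopes, exponents and Stokes structure at each singular point, and --- crucially --- that no spurious regular singularity is created at $0$ or at $\infty$, so that the polar divisors match. Since the Painlev\'e list is finite, this is a case-by-case verification organized along the classification already set up in~\cite{ESz2}. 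It yields well-definedness of $\mathcal{F}$ on points, $\mathcal{F}\colon \mathcal{M}_{\mathrm{P}}\to\mathcal{M}_{\mathrm{JKT}}$, together with a canonical transport of the framing/parabolic data through the local isomorphisms.

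\emph{Families, algebraicity, and inversion.} Because $\mathcal{F}$ is an exact functor on holonomic $\mathcal{D}_{\mathbb{A}^1}$-modules commuting with arbitrary base change, it carries an algebraic family of Painlev\'e connections to an algebraic family of JKT connections; combined with the local isomorphisms of the previous step this upgrades the point map to a morphism of schemes. Running the same argument for the inverse transform $\mathcal{F}^{-1}$ (kernel $e^{-xy}$, up to the standard shift) produces a morphism $\mathcal{M}_{\mathrm{JKT}}\to\mathcal{M}_{\mathrm{P}}$, and the canonical isomorphisms $\mathcal{F}^{-1}\mathcal{F}\simeq\mathrm{id}$, $\mathcal{F}\mathcal{F}^{-1}\simeq\mathrm{id}$ at the level of $\mathcal{D}$-modules descend to the two moduli spaces; hence $\mathcal{F}$ is biregular. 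As a sanity check one verifies that both moduli spaces carry the expected $2$-dimensional symplectic structure, compatibly with $\mathcal{F}$.

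\emph{Main obstacle.} The delicate point is the local matching at the singularity whose slope jumps under $\mathcal{F}$, where the rank and irregularity bookkeeping of the stationary phase formula must reproduce the JKT local type on the nose; a secondary subtlety is ensuring that the naive construction gives a morphism of schemes rather than merely a bijection of closed points, which is why one leans on the base-change compatibility of $\mathcal{F}$ as an integral transform of $\mathcal{D}$-modules.
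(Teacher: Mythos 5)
Your proposal follows essentially the same route as the paper: a case-by-case matching of formal local types via stationary phase/formal microlocalization, algebraicity from the integral-transform definition of $\mathcal{F}$, and invertibility from the (anti-)involutivity of the transform ($\mathcal{F}^2=(-1)^*$, equivalently your inverse kernel). The only cosmetic difference is the direction — the paper computes from the rank $3$ JKT side down to the rank $2$ Painlev\'e side and notes that this suffices by involutivity — and, as you anticipate in your "main obstacle," the substance of the argument lies in the normalizations (e.g.\ arranging a zero eigenvalue of the residue at $0$ and of the leading coefficient at $\infty$ by rank-one twists) that make the rank and slope bookkeeping come out on the nose.
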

To state a precise version, we will need to write local forms of the rank $3$ irregular connections in Sections~\ref{sec:dR}. 
Fourier--Laplace transformation of holonomic $\mathcal{D}$-modules, as an equivalence of categories, has a long history, dating back at least to the seminal work of Malgrange~\cite{Malgr}, but the special case of irregularity of slope purely $1$ at infinity and a logarithmic singularity at $0$ had been studied already by Balser, Jurkat and Lutz~\cite{BJL}. 
Fourier--Laplace transformation in the Painlev\'e VI case has been first analyzed by Harnad~\cite[Section~3.c]{Har} from a Hamiltonian perspective and using loop algebras; the properties of the transformation have been worked out by Mazzocco~\cite{Maz}. 
In~\cite{Sz_Laplace}, the second author studied Fourier--Laplace transformation as an algebraic map between suitable de Rham moduli spaces in the special case when the singularities away from $\infty$ are regular and the irregular singularity at $\infty$ is of the simplest possible type (unramified of Katz invariant $1$).
%, also known as a non-commutative Hodge structure of exponential type~\cite[Definition~2.12]{KKP}). 
This setup applies directly to one of the cases that we consider here (Painlev\'e VI), and the approach carries over too, up to some technical details. 
Our approach to finding the irregular types of the transformed objects relies on a systematic use of formal microlocalization. 

We note that in~\cite[Theorem~1.1]{Boa5}, Boalch described certain isomorphisms between open subsets of moduli spaces of irregular connections defined over the trivial bundle given by different "readings" of so-called supernova graphs. 
One important instance of such isomorphisms is precisely given by Fourier--Laplace transformation. 
In~\cite[Section~4]{Dou}, the isomorphisms of Theorem~\ref{thm:FLisom} are analyzed from the perspective of supernova graphs. 
Let us point out two differences in the results we find: in the case Painlev\'e V, we do find a rank $3$ representative, while the one of higher rank found by Dou\c{c}ot is of rank $4$; and in the doubly degenerate Painlev\'e III case (corresponding to $\widetilde{D}_8$) we do not find a  rank $3$ representative, whereas~\cite{Dou} does find one. 
Currently, we do not have an explanation for these differences. 
We also note that Alameddine and Marchal~\cite{AM} discusses the Painlev\'e IV case of this isomorphism in great detail from the isomonodromy perspective. 

The method of our proof is expected to generalize to showing that Fourier--Laplace transformation is an algebraic isomorphism between suitable non-abelian Hodge moduli spaces in higher generality, too. 
%Moreover, our current proof should carry over to more general situations, once the corresponding moduli spaces are defined. 

\section{Preliminaries}

\subsection{Meromorphic connections, $\mathcal{D}$-modules}\label{sec:connection}

Let $X = \operatorname{Spec} \mathbb{C}[z] \cup \operatorname{Spec} \mathbb{C}[w]$ be the projective line $\mathbb{C}P^1$, where $z w = 1$. 
Let $\mathcal{D}_X$ be the sheaf of analytic differential operators on $X$. 
It is the sheaf of noncommutative $\mathcal{O}_X$-algebras that is generated over $\operatorname{Spec}\mathbb{C}[z]$ by $\partial_z$, %$\partial_z^n$ for all $n\in\mathbb{N}$, 
subject to the relation $[\partial_z, z] = 1$. 
It has a similar description over $\operatorname{Spec}\mathbb{C}[w]$, with the relationship $w\partial_w = - z\partial_z$ between its generators. 
Similarly, one can define the noncommutative algebra $\mathbb{C}[t] \langle \partial_t \rangle$ of polynomial differential operators on $X$. 
A left module $\mathbb{M}$ over $\mathcal{D}_X$ is said to be holonomic if it is finitely generated and torsion. 
In the sequel, $\mathbb{M}$ will always denote a holonomic left $\mathcal{D}_X$-module. 
The rank of $\mathbb{M}$ is then defined as 
\[
    \operatorname{rk} \mathbb{M} = \dim_{\mathbb{C}(z)} \mathbb{C}(z) \otimes_{\mathbb{C}[z]} \mathbb{M} .
\]
Moreover, there exists a maximal Zariski open subset $U\subset X$ such that $\mathbb{M}|_U$ is finitely generated over $\mathcal{O}_U$. 
The set of singular points of $\mathbb{M}$ is then defined to be $X\setminus U$ and is denoted by $C = \operatorname{Sing}(\mathbb{M})$. 
We denote by $j$ the inclusion $U\to X$. 

\subsection{Formal theory}

We will focus on the formal structure of $\mathbb{M}$ at $\operatorname{Sing}(\mathbb{M})$. 
For any $c\in \operatorname{Sing}(\mathbb{M})$ we pick a local holomorphic coordinate $t$ of $X$. 
For $c\in \operatorname{Spec}\mathbb{C}[z]$, we may take $t = z-c$, and for $c=\infty$, we take $t=w$. 
A special role is played by regular singular modules, which are by definition the ones that admit a fundamental system of at most polynomial growth in $|t|$ as $t\to 0$ within a sector of finite opening. 
Next, we consider 
\[
    \mathbb{M}_c =  \mathbb{C} \llbracket t \rrbracket \langle \partial_t \rangle \otimes_{\mathbb{C}[t] \langle \partial_t \rangle} \mathbb{M} .
\]
The Hukuhara--Levelt--Turrittin theorem~\cite{Huk},~\cite{Lev},~\cite{Tur}  then states that there exists some positive integer $d$ and a finite Galois extension 
\begin{equation}\label{eq:extension}
    K = \mathbb{C} (\!( u , t )\!)/(u^d - t)     
\end{equation}
of $\mathbb{C} (\!( t )\!)$  such that 
\begin{equation}\label{eq:HLT}
    K\otimes_{\mathbb{C}[t]} \mathbb{M} \cong \bigoplus_{q\in K/\mathbb{C}\llbracket u \rrbracket} 
(\mathbb{C}\llbracket u \rrbracket ,\operatorname{d}+\operatorname{d}\! q) 
\otimes_{\mathbb{C}\llbracket u \rrbracket} \mathcal{F}_{q} .
\end{equation}
Here $\operatorname{d}$ stands to denote the trivial connection and the exterior differentiation operator with respect to $u$ and $\mathcal{F}_{q}$ is a free $\mathbb{C}\llbracket t \rrbracket$-module with regular singular connection. 
For all but finitely many values of $q$ the rank of $\mathcal{F}_{q}$ is $0$. 
The decomposition is unique up to permutations, gauge transformations and further field extensions. 
We define the slope of $q$ as 
\[
    \lambda (q) = \frac{\deg_u \! q}d; 
\]
it is clear that this quantity remains unchanged under further Galois extensions $L\vert K$. 
We say that $\mathbb{M}_c$ is pure of slope $\lambda$ if all its nontrivial summands $q$ are of the same slope $\lambda (q) = \lambda$. 
We call the maximal slope appearing in the decomposition the Katz-invariant of $\mathbb{M}_c$. 
We denote by $\mathbb{M}^{<\lambda}$ the direct sum of its pure constituents  of slopes strictly less than $\lambda$, and similarly for $\mathbb{M}^{\leq \lambda}, \mathbb{M}^{> \lambda}, \mathbb{M}^{\geq \lambda}$. 
Finally, we define the Swan conductor (or irregularity) of $\mathbb{M}_c$ as 
\[
    \operatorname{Sw} (\mathbb{M}_c ) = \sum_q \lambda (q) \operatorname{rk}(\mathcal{F}_{q}). 
\]
It turns out~\cite[Th\'eor\`eme~(2.1.2.6)]{Lau} that $\operatorname{Sw} (\mathbb{M}_c ) \in \mathbb{N}$.  
As a matter of fact, each individual summand is equal to the height of the side corresponding to $q$ in the Newton polygon of $\mathbb{M}_c$ with respect to the variables $t, t^{-1}\partial_t$, see~\cite[Chapitre~III]{Malgr}. 

% \subsection{Nearby and vanishing cycle functor}

% Let $\Delta^* = \{ z\vert \, 0 < \vert z \vert < 1 \}$. 
% We will use the fact that the category of holonomic $\mathcal{D}_{\Delta^*}$-modules is equivalent to the category of quadruples 
% \[
%     ( \Psi_c ,  \Phi_c , \operatorname{can} , \operatorname{var} ) 
% \]
% where $ \Psi_c ,  \Phi_c$ are finite dimensional vector spaces and 
% \[ 
%      \operatorname{can}\colon \Psi_c \to  \Phi_c  , \quad \operatorname{var} \colon \Phi_c \to  \Psi_c
% \]
% are linear maps such that 
% \[
%    1 + \operatorname{var} \circ \operatorname{can} 
% \]
% is invertible. 
% Specifically, for given $\mathbb{M}$ we have 
% \[
%     \operatorname{var} \circ \operatorname{can}  = T - 1 .
% \]
% where $T$ denotes the monodromy of $\mathbb{M}$. 
% We call $\Psi_c ,  \Phi_c$ the nearby cycle and vanishing cycle of $\mathbb{M}$ at $c$, respectively. 

\subsection{Geometric Fourier--Laplace transformation}%(--Deligne--Katz--Laumon--Malgrange--Nahm...)  and formal stationary phase}

See~\cite[Chapitre~VI]{Malgr},~\cite{Lau},~\cite{BE},~\cite{GL},~\cite{Sab},~\cite{Sz_Laplace} for more details on the material of this section. 
(Note that~\cite{Lau} does not exactly concern the $\mathcal{D}$-module theoretic Fourier transformation that we need, but rather a very closely related one, namely the one in the derived category of $\ell$-adic sheaves, but the intuition it provides is useful.) 

Let $\widehat{X}$ be another copy of $\CP1$: 
\[
\widehat{X} =\operatorname{Spec}\mathbb{C}[\hat{z} ] \cup \operatorname{Spec}\mathbb{C}[\hat{w} ]
\]
with $\hat{w} = \hat{z}^{-1}$. 
The variable $\hat{z}$ will be thought of as the dual variable of $z$. 
We denote the projection morphisms by 
\begin{equation}\label{eq:projections}
        p\colon X\times \widehat{X}\to X, \qquad \hat{p}\colon X\times \widehat{X}\to  \widehat{X}. 
\end{equation}
Let us denote by $\psi$ the sheaf of $\mathcal{D}_{X\times \widehat{X}}$-modules that is the free $\mathcal{O}_{X\times \widehat{X}}$-module of rank $1$, endowed with the connection 
\[
    \operatorname{d}+\operatorname{d} (z \hat{z}) = \operatorname{d}+\hat{z} \operatorname{d}\! z + z \operatorname{d}\! \hat{z}. 
\]
We call $\psi$ the \emph{kernel $\mathcal{D}$-module} of the transformation. 
For convenience, we use the notation $\operatorname{Sing}(\mathbb{M}) = C$. 
A minimal extension $\mathbb{M}_{\operatorname{min}}$ of $\mathbb{M}$ over $C$ is defined by the properties: 
\begin{enumerate}
    \item $\mathbb{C}(z) \otimes_{\mathbb{C}[z]} \mathbb{M}_{\operatorname{min}} \cong \mathbb{C}(z) \otimes_{\mathbb{C}[z]} \mathbb{M}$ (it is an extension), and
    \item $\mathbb{M}_{\operatorname{min}}$ admits no quotient or submodules supported in dimension $0$ (it is minimal).
\end{enumerate}
There exists a unique minimal extension; see~\cite{Sab2}. 
We then define the Fourier--Laplace transform of $\mathbb{M}$ by 
\begin{equation}\label{eq:FL}
        \widehat{\mathbb{M}} = \mathcal{F}(\mathbb{M} ) = R \hat{p}_* ( p^* \mathbb{M}_{\operatorname{min}}  \otimes \psi ) [1] 
\end{equation}
where, as usual, $[1]$ stands for the shift functor and
\begin{align*}
    p^* &\colon D^b_{\operatorname{hol}}(\mathcal{D}_X) \to D^b_{\operatorname{hol}}(\mathcal{D}_{X\times \widehat{X}}) \\
    R\hat{p}_* &\colon D^b_{\operatorname{hol}}(\mathcal{D}_{X\times \widehat{X}}) \to D^b_{\operatorname{hol}}(\mathcal{D}_{\widehat{X}})
\end{align*}
are the pull-back and right derived direct image functors respectively, between derived categories of bounded complexes of holonomic $\mathcal{D}$-modules.
We will denote by $\widehat{C}$ the set of singular points of~\eqref{eq:FL}. 

\subsection{Local Fourier--Laplace transformation}
In order to understand the contribution of the singularities of $\mathbb{M}$ at its singular points $C$ to the singularities of its Fourier transform $\widehat{\mathbb{M}}$, Laumon defined~\cite[D\'efinition~(2.4.2.3)]{Lau} local versions $\mathcal{F}^{0,\widehat{\infty}}, \mathcal{F}^{\infty,\hat{0}},\mathcal{F}^{\infty,\widehat{\infty}}$ of the functor $\mathcal{F}$ for $\ell$-adic sheaves. 
They were subsequently translated into the category of $\mathcal{D}$-modules by Malgrange~\cite[Section~VII.3]{Malgr}. 
More recent accounts of local Fourier transformation of $\mathcal{D}$-modules have been proposed by~\cite[Proposition-Definition~3.5, Definitions~3.8,~3.11]{BE},~\cite[p.~750]{GL}. 
We will now describe these local Fourier transformations. 
There are three essentially equivalent descriptions: via vanishing cycle functors~\cite{Lau}, formal microlocalization~\cite{GL} or Deligne--Malgrange's notion of a good pair of lattices~\cite{BE}. 
We adopt here the third point of view, and the second one in the next section. 
A \textit{pair of good lattices} for a $\mathcal{D}_X$-module $\mathbb{M}$ induced by a meromorphic connection $\nabla$ over $(X,C)$ is a pair of vector bundles $\mathcal{V}, \mathcal{W}$ over $X$ satisfying 
\begin{enumerate}
    \item $\mathcal{V}\subset \mathcal{W} \subset j_* \mathbb{M}$
    \item $\nabla ( \mathcal{V} ) \subset K_X(C) \otimes \mathcal{W}$
    \item For any effective divisor $D$ supported on $C$, the inclusion 
    \[
    \left( \mathcal{V} \xrightarrow{\nabla} K_X(C) \otimes \mathcal{W} \right) \to \left( \mathcal{V}(D) \xrightarrow{\nabla} K_X(C) \otimes \mathcal{W}(D)\right)
    \]
    is a quasi-isomorphism.
\end{enumerate}
Good pairs of lattices exist~\cite{Del}. 

Now, assume that $\mathbb{M}$ has a (possibly irregular) singularity at $c$; we will take $c=0$ for simplicity. 
Let us restrict $\mathbb{M}$ to a sufficiently small disk centered at $0$ (containing no other singular point) and take any regular singular extension of $\mathbb{M}$ to $z=\infty$. 
Let $\mathcal{V}, \mathcal{W}$ be a pair of good lattices for this extension. 
We then set
\[
    \mathcal{F}^{0,\widehat{\infty}}(\mathbb{M}) = \operatorname{coker}\left(  \mathcal{V}\otimes_{\mathbb{C}[z]} \mathbb{C} \llbracket z, \hat{w} \rrbracket \xrightarrow{\hat{w}\partial_z + 1} z^{-1} \mathcal{W}\otimes_{\mathbb{C}[z]} \mathbb{C} \llbracket z, \hat{w} \rrbracket \right), 
\]
where we use the shorthand $\partial_z$ for $\nabla_{\partial_z}$. 
The operator $\partial_z + \partial_{\hat{z}} + z + \hat{z}$ induces a natural structure of $\mathbb{C} [ \hat{w} ] \langle \partial_{\hat{w}} \rangle$-module on $\mathcal{F}^{0,\widehat{\infty}}(\mathbb{M})$ (recall that $\hat{w} = \hat{z}^{-1}$). 
Similar formulas define $\mathcal{F}^{\infty,\hat{0}},\mathcal{F}^{\infty,\widehat{\infty}}$. 
The properties of these local Fourier transformations can be summarized as follows.
\begin{theorem}\label{thm:LocalFourier}~\cite[Th\'eor\`eme~(2.4.3)]{Lau}~\cite[Proposition~3.14]{BE}
The functors $\mathcal{F}^{0,\widehat{\infty}},\mathcal{F}^{\infty,\hat{0}},\mathcal{F}^{\infty,\widehat{\infty}}$ are exact. Furthermore, 
\begin{enumerate}
    \item \label{thm:LocalFourier1}
    we have $\operatorname{rk}(\mathcal{F}^{0,\widehat{\infty}}(\mathbb{M})) = \operatorname{rk}(\mathbb{M}) + \operatorname{Sw}(\mathbb{M}_0), \quad \operatorname{Sw}(\mathcal{F}^{0,\widehat{\infty}}(\mathbb{M})_{\widehat{\infty}}) = \operatorname{Sw}(\mathbb{M}_0)$; 
    \item \label{thm:LocalFourier2}
    if $\mathbb{M}_{\infty}$ is pure of slope $\lambda <1$, then we have 
    \[
    \operatorname{rk}(\mathcal{F}^{\infty,\hat{0}}(\mathbb{M})) = \operatorname{rk}(\mathbb{M}_{\infty}) - \operatorname{Sw}(\mathbb{M}_{\infty}), \quad  \operatorname{Sw}(\mathcal{F}^{\infty,\hat{0}}(\mathbb{M})_{\hat{0}}) = \operatorname{Sw}(\mathbb{M}_{\infty}),
    \]
    and if $\mathbb{M}_{\infty}$ is pure of slope $\lambda \geq 1$, then we have 
    \[
    \mathcal{F}^{\infty,\hat{0}}(\mathbb{M}) = 0; 
    \]
    \item \label{thm:LocalFourier3}
    if $\mathbb{M}_{\infty}$ is pure of slope $\lambda > 1$, then we have 
    \[
    \operatorname{rk}(\mathcal{F}^{\infty,\widehat{\infty}}(\mathbb{M})) = \operatorname{Sw}(\mathbb{M}_{\infty}) - \operatorname{rk}(\mathbb{M}_{\infty}), \quad \operatorname{Sw}(\mathcal{F}^{\infty,\widehat{\infty}}(\mathbb{M})_{\widehat{\infty}}) = \operatorname{Sw}(\mathbb{M}_{\infty})
    \]
    and if $\mathbb{M}_{\infty}$ is pure of slope $\lambda \leq 1$, then we have 
    \[
    \mathcal{F}^{\infty,\widehat{\infty}}(\mathbb{M}) = 0.
    \]
\end{enumerate}
\end{theorem}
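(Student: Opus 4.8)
The plan is to treat the three functors uniformly: first establish exactness, then reduce the rank and Swan conductor formulas to the case of elementary (pure, exponentially twisted regular singular) modules, where both follow from a formal stationary-phase computation. For \emph{exactness} of, say, $\mathcal{F}^{0,\widehat{\infty}}$, given a short exact sequence $0 \to \mathbb{M}' \to \mathbb{M} \to \mathbb{M}'' \to 0$ I would choose good pairs of lattices compatibly (taking induced lattices on sub- and quotient modules in Deligne's construction), so that the two-term complexes $[\,\mathcal{V} \otimes_{\mathbb{C}[z]} \mathbb{C}\llbracket z, \hat{w} \rrbracket \xrightarrow{\hat{w}\partial_z + 1} z^{-1}\mathcal{W} \otimes_{\mathbb{C}[z]} \mathbb{C}\llbracket z, \hat{w} \rrbracket\,]$ for $\mathbb{M}', \mathbb{M}, \mathbb{M}''$ fit into a short exact sequence of complexes (using flatness of $\mathbb{C}\llbracket z, \hat{w} \rrbracket$ over $\mathbb{C}[z]$). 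Because $\hat{w}\partial_z + 1$ is injective --- which is immediate from the $\hat{w}$-adic leading term: if $m = \sum_{k \geq 0} m_k \hat{w}^k$ satisfies $(\hat{w}\partial_z + 1)m = 0$, the coefficient of $\hat{w}^0$ gives $m_0 = 0$, and the recursion $m_k = -\nabla_{\partial_z} m_{k-1}$ then forces $m = 0$ --- the associated long exact cohomology sequence collapses to a short exact sequence of cokernels, i.e.\ of the $\mathcal{F}^{0,\widehat{\infty}}$'s. The same argument applies to $\mathcal{F}^{\infty,\hat{0}}$ and $\mathcal{F}^{\infty,\widehat{\infty}}$.

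For the numerical identities I would use that $\operatorname{rk}$ and $\operatorname{Sw}(-_c)$ are additive in short exact sequences, so by the exactness just proved it suffices to verify them on the graded pieces of any filtration of the formal module at the relevant singular point. By the Hukuhara--Levelt--Turrittin theorem such a filtration exists with graded pieces the elementary modules $\rho_{d,*}(\mathcal{E}^q \otimes R)$, where $\rho_d \colon u \mapsto u^d = t$ is a ramification, $q \in u^{-1}\mathbb{C}[u^{-1}]$, and $R$ is regular singular. One then checks directly from the defining formulas that the local Fourier transform of an elementary module is again elementary; this compatibility with the formal decomposition --- a ``formal stationary phase'' statement --- is the one step where the explicit shape of the operators genuinely enters, and it reduces everything to the elementary case.

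In the elementary case the transformed irregular datum $\hat{q}$ is governed by the critical-point relation $q'(t) + \hat{z} = 0$, $\hat{q}(\hat{z}) = q(t) + z\hat{z}$, read in suitable ramified coordinates --- the formal analogue of the saddle-point evaluation of $\int e^{q(z) + z\hat{z}}\,\operatorname{d}\! z$. Counting slopes gives: $\mathcal{F}^{0,\widehat{\infty}}$ sends slope $\lambda$ at $0$ to slope $\lambda/(\lambda + 1)$ at $\widehat{\infty}$; $\mathcal{F}^{\infty,\hat{0}}$ sends slope $\lambda < 1$ at $\infty$ to slope $\lambda/(1-\lambda)$ at $\hat{0}$ and annihilates pure slopes $\lambda \geq 1$ (for which the critical point escapes $\hat{0}$); and $\mathcal{F}^{\infty,\widehat{\infty}}$ sends slope $\lambda > 1$ at $\infty$ to slope $\lambda/(\lambda - 1) > 1$ at $\widehat{\infty}$ and annihilates pure slopes $\lambda \leq 1$. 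On a pure module $\operatorname{Sw} = \lambda \cdot \operatorname{rk}$, and since $\hat{q}$ has the same height as $q$ the Swan conductor is preserved; equating the preserved Swan conductor with (new slope) $\times$ (new rank) and with $\lambda \cdot \operatorname{rk}(\mathbb{M})$ and solving for the new rank yields $\operatorname{rk}(\mathbb{M}) + \operatorname{Sw}(\mathbb{M}_0)$, $\operatorname{rk}(\mathbb{M}_\infty) - \operatorname{Sw}(\mathbb{M}_\infty)$, and $\operatorname{Sw}(\mathbb{M}_\infty) - \operatorname{rk}(\mathbb{M}_\infty)$ respectively; the regular singular pieces ($q = 0$) contribute $\operatorname{rk}$ to the rank and $0$ to the Swan conductor on both sides, consistently with the statement.

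The main obstacle is the compatibility of the local Fourier transform with the Hukuhara--Levelt--Turrittin decomposition, together with the exact ramification and multiplicity bookkeeping in the elementary case --- for instance that $d$ sheets of slope $\lambda$ become $d + \deg_u q$ sheets after $\mathcal{F}^{0,\widehat{\infty}}$. Exactness and the additivity reduction are soft; it is this compatibility and the elementary computation that carry the real content, and they are precisely what occupies~\cite{Lau} and~\cite{BE}.
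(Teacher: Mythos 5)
The paper does not prove this theorem: it is imported verbatim as a black box from Laumon (Th\'eor\`eme (2.4.3)) and Bloch--Esnault (Proposition 3.14), so there is no internal proof to compare against. Measured against those sources, your sketch is a faithful reconstruction of the standard argument: exactness from the injectivity of $\hat{w}\partial_z+1$ on $\hat{w}$-adically complete modules, reduction of the numerical identities to elementary modules via Hukuhara--Levelt--Turrittin and additivity, and the slope rules $\lambda\mapsto\lambda/(1+\lambda)$, $\lambda/(1-\lambda)$, $\lambda/(\lambda-1)$ from the critical-point relation $q'+\hat{z}=0$, from which the rank formulas follow correctly once the Swan conductor is known to be preserved. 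Your arithmetic checks out, including the vanishing statements (for a pure slope-$1$ summand the critical point sits at a finite nonzero $\hat{z}$, so both $\mathcal{F}^{\infty,\hat{0}}$ and $\mathcal{F}^{\infty,\widehat{\infty}}$ kill it, consistently with Theorem~\ref{thm:StationaryPhase}).

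Two points remain genuinely unproved in your sketch, and you should be explicit that you are quoting them rather than establishing them. First, the exactness argument needs more than ``choose lattices compatibly'': one must show that a good pair of lattices for $\mathbb{M}$ induces good pairs on a submodule and quotient (or that the cokernel is independent of the choice of good lattices), which is where Deligne's construction and condition (3) in the definition of a good pair actually get used. Second, and more seriously, the compatibility of the local Fourier transforms with the HLT decomposition --- that the transform of an elementary module $\rho_{d,*}(\mathcal{E}^q\otimes R)$ is again elementary with $\hat{q}$ given by the Legendre-type relation and with the same ``height'' (hence preserved Swan conductor) --- is the entire content of the theorem; your appeal to the saddle-point heuristic identifies the right mechanism but does not substitute for the computation carried out in Laumon, Bloch--Esnault, Garc\'{\i}a-L\'opez or Fang. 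As a reading of where the difficulty lies your proposal is accurate; as a proof it defers exactly the steps that the cited references exist to supply.
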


Regarding the summands of slope $1$, we have the following special cases of the formal stationary phase formula. 
Let us denote the set of singularities of $\widehat{\mathbb{M}}$ by $\widehat{C}$.  
\begin{theorem}\label{thm:StationaryPhase}~\cite[Th\'eor\`eme~VIII.3.3]{Malgr},~\cite[Lemme~(2.4.2.1)]{Lau}
Let $\mathbb{M}$ be a holonomic $\mathcal{D}_X$-module with singular set $C\subseteq \{ 0, \infty \}$. 
We assume that if $0\in C$ then it is a regular singularity, and $\infty$ is an irregular singularity. 
Then,  for all nontrivial summands of slope $1$ in the HLT-decomposition~\eqref{eq:HLT} of $\mathbb{M}_{\infty}$, i.e. of the form $Q_j = q_j w^{-1} + o(w^{-1})$ with $q_j\in \mathbb{C}$, we have $-q_j \in \widehat{C}$.
Moreover, $\widehat{\mathbb{M}}$ has a regular singularity at $-q_j$ whose monodromy around $-q_j$ is equal to the direct sum of the monodromy of $\mathcal{F}_{Q_j}$ around $\infty$ with the identity matrix of appropriate rank $n_j = \operatorname{rk} (\widehat{\mathbb{M}}) - \operatorname{rk} (\mathcal{F}_{Q_j})$. 
Furthermore, if $q\in \widehat{C}$ then either $q\in \{ \hat{0}, \widehat{\infty} \}$ or $q=-q_j$ for some summand $Q_j = q_j w^{-1} + o(w^{-1})$ in the HLT-decomposition of $\mathbb{M}_{\infty}$. 
\end{theorem}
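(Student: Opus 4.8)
The assertion is local over $\widehat{X}$, so the plan is to reduce everything to Laumon's stationary phase (product) formula, which identifies the formal germ of $\widehat{\mathbb M}$ at any point $\hat c\in\widehat X$ with a direct sum of \emph{local} Fourier transforms of the formal germs $\mathbb M_c$, $c\in C$ (with, at $\hat c=\widehat\infty$, an extra summand measuring $\operatorname{rk}\mathbb M$); this is the content of~\cite[Th\'eor\`eme~VIII.3.3]{Malgr} together with~\cite[Lemme~(2.4.2.1)]{Lau}. Granting this, decompose $\mathbb M_\infty=\mathbb M_\infty^{<1}\oplus\mathbb M_\infty^{=1}\oplus\mathbb M_\infty^{>1}$ by slopes and compute the relevant local transforms.

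First I would dispose of the pieces that cannot create a finite nonzero singularity. Away from $C$ the module $\mathbb M$ is $\mathcal O_X$-coherent with connection, so $p^*\mathbb M\otimes\psi$ is a connection there, and since the phase $z\hat z$ has no critical point along $\{z\text{ finite}\}$ when $\hat z\neq 0$, the direct image $R\hat p_*$ produces no singularity of $\widehat{\mathbb M}$ away from $\{\hat 0,\widehat\infty\}$ from those loci. By Theorem~\ref{thm:LocalFourier}, $\mathcal F^{0,\widehat\infty}(\mathbb M_0)$ is supported at $\widehat\infty$, $\mathcal F^{\infty,\hat 0}(\mathbb M_\infty^{<1})$ at $\hat 0$, and $\mathcal F^{\infty,\widehat\infty}(\mathbb M_\infty^{>1})$ at $\widehat\infty$. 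Hence every singularity of $\widehat{\mathbb M}$ other than $\hat 0,\widehat\infty$ must come from $\mathbb M_\infty^{=1}$, which already yields the last sentence of the theorem once the slope-$1$ analysis below locates those singularities at the points $-q_j$.

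Now the slope-$1$ part. Write the HLT-decomposition~\eqref{eq:HLT} of $\mathbb M_\infty^{=1}$ as $\bigoplus_j(\mathbb C\llbracket w\rrbracket,\operatorname{d}+\operatorname{d}Q_j)\otimes\mathcal F_{Q_j}$ with $Q_j=q_jw^{-1}+o(w^{-1})$ and $q_j\in\mathbb C\setminus\{0\}$ (the summand has slope exactly $1$). Tensoring the $j$-th summand with $\psi$ replaces the exponential factor by $\operatorname{d}(Q_j+z\hat z)$, whose leading term in $z$ is $(q_j+\hat z)z$; its confluent stationary point sits at $z=\infty$ precisely when $q_j+\hat z=0$. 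Thus the local transform of this summand is smooth for $\hat z\neq -q_j$ and singular at $\hat z=-q_j$, so $-q_j\in\widehat C$. Moreover this is an instance of the classical local Fourier transformation between formal connections of slope exactly $1$ at $\infty$ and regular singular formal connections at a finite point: it is rank-preserving and carries the formal monodromy of $\mathcal F_{Q_j}$ around $\infty$ to the local monodromy of the transform (Balser--Jurkat--Lutz~\cite{BJL}; see also~\cite[Section~VII.3]{Malgr}). Hence this summand contributes at $-q_j$ a regular singularity of rank $\operatorname{rk}\mathcal F_{Q_j}$ with monodromy that of $\mathcal F_{Q_j}$.

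Finally, assemble the germ of $\widehat{\mathbb M}$ at $-q_j$. By the product formula it is the direct sum of the contribution just described and those of all remaining summands of $\mathbb M$ --- the germ at $0$, the slopes $\neq 1$ at $\infty$, and the slope-$1$ summands with $q_k\neq q_j$ --- each of which, by the first step, is smooth at $-q_j$, i.e.\ a trivial connection, of total rank $n_j=\operatorname{rk}\widehat{\mathbb M}-\operatorname{rk}\mathcal F_{Q_j}$. Therefore $\widehat{\mathbb M}$ is regular singular at $-q_j$ with local monodromy the direct sum of the monodromy of $\mathcal F_{Q_j}$ around $\infty$ and $\operatorname{Id}_{n_j}$ (if several $q_j$ coincide, one first sums the corresponding $\mathcal F_{Q_j}$). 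The main obstacle is precisely the input we have deliberately quoted: establishing the stationary phase / product formula identifying the global transform with a sum of local ones on formal germs; once that is available, the slope-exactly-$1$ computation is an explicit normal-form manipulation and the rank bookkeeping is routine.
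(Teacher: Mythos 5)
This statement is quoted in the paper from \cite[Th\'eor\`eme~VIII.3.3]{Malgr} and \cite[Lemme~(2.4.2.1)]{Lau}; the paper supplies no proof of its own, so there is nothing internal to compare your argument against. Your sketch is a faithful reconstruction of the standard derivation: you take the stationary phase/product formula as the black box, split $\mathbb{M}_\infty$ by slopes, observe that the slope $<1$, slope $>1$ and finite-singularity pieces only feed $\hat{0}$ and $\widehat{\infty}$, and extract the finite nonzero singularities from the slope-$1$ summands via the cancellation $(q_j+\hat{z})z$ at $\hat{z}=-q_j$, with the monodromy statement coming from the classical slope-$1$/regular correspondence of Balser--Jurkat--Lutz and Malgrange. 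Since the black box you invoke is exactly the content of the references the theorem cites, your text is best read as an unpacking of the citation rather than an independent proof --- which you acknowledge, and which is appropriate for a quoted result.

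One small caveat worth flagging: the step ``the local transform at $-q_j$ is \emph{regular} singular'' uses implicitly that the $o(w^{-1})$ tail of $Q_j$ is bounded (slope $0$ after removing the linear term). A slope-$1$ summand in the sense of the paper's definition could a priori have a ramified tail of positive sub-slope $\mu<1$, in which case $\mathcal{F}^{(\infty,-q_j)}$ produces an irregular germ of slope $\mu/(1-\mu)$ at $-q_j$. The theorem as stated shares this implicit hypothesis, and it is satisfied in every case the paper actually uses (Table~\ref{tab:slopes}), so this does not affect the applications; but if you intend your argument as a complete proof you should state the hypothesis on the tail explicitly.
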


Equivalent formulation of the Fourier--Laplace transformation comes from the following isomorphism of Weyl-algebras $\mathbb{C}[z]\langle \partial_z\rangle$ and $\mathbb{C}[\hat{z}]\langle \partial_{\hat{z}}\rangle$, called the Fourier--Laplace anti-involution:
\begin{equation}\label{eq:inv}
\begin{aligned}
    z & \mapsto-\partial_{\hat{z}} \\
    \partial_z & \mapsto\hat{z}
    \end{aligned}
\end{equation}
This map equips the $\mathbb{C}[z]\langle \partial_z\rangle$-module $\mathbb{M}$ with a $\mathbb{C}[\hat{z}]\langle \partial_{\hat{z}}\rangle$-module structure, denoted by $\widehat{\mathbb{M}}$, and vice versa.

\subsection{Formal microlocalization and formal stationary phase}\label{sec:microloc}

The method of formal microlocalization, related to Theorem~\ref{thm:StationaryPhase}, offers an explicit understanding of the formal types of $\widehat{\mathbb{M}}$ in terms of those of $\mathbb{M}$. 
For reference, see~\cite[Section~V.3]{Sab_Frobenius} and \cite{GL}. 

The ring $\mathcal{E}_0$ of formal microdifferential operators at $0$ consists of all formal series of the form 
\begin{equation}\label{eq:micro}
    \sum_{i\geq i_0} a_i (z) \hat{w}^i = \sum_{i\leq - i_0} a_{-i} (z) \hat{z}^i
\end{equation}
for some $i_0 \in \mathbb{Z}$, where $a_i \in \mathbb{C}\llbracket z \rrbracket$. 
The (noncommutative!) product operation on $\mathcal{E}_0$ is given by 
\[
    \hat{w} \cdot a(z) = \sum_{k=0}^{\infty} (-1)^k a^{(k)} (z) \hat{w}^{k+1}. 
\]
This is compatible with (formal) Laplace transformation because applying $\hat{w}^{-1} = \hat{z}=\partial_z$ to both sides and using the Leibniz rule we get 
\[
    a(z) = \sum_{k=0}^{\infty} (-1)^k (a^{(k+1)} (z) \hat{w}^{k+1} + a^{(k)} (z) \hat{w}^k ),
\]
which is a formal identity. 
The ring $\mathcal{E}_0$ is a right-left $\mathbb{C} [z ] \langle \partial_z \rangle$-bimodule. 
The formal microlocalized module of a $\mathbb{C} [z ] \langle \partial_z \rangle$-module $\mathbb{M}$ at $0$ is the left $\mathcal{E}_0$-module 
\[
    \mathbb{M}^{\mu}_0 = \mathcal{E}_0 \otimes_{\mathbb{C} [z ] \langle \partial_z \rangle} \mathbb{M}.
\]
Crucially, it turns out that $\mathbb{M}^{\mu}_0$ is supported over the closed point $(z)\in \operatorname{Specf} \mathbb{C} \llbracket z \rrbracket$. 
There is a similar construction for the microlocalized module $\mathbb{M}^{\mu}_{\infty}$ of $\mathbb{M}$ at $\infty$, supported at $(w)\in \operatorname{Specf} \mathbb{C} \llbracket w \rrbracket$, with the ring $\mathcal{E}_{\infty}$, consisting of formal series of the form
\begin{equation}\label{eq:micro2}
    \sum_{i\geq i_0} a_i (z^{-1}) \hat{w}^i = \sum_{i\leq - i_0} a_{-i} (z^{-1}) \hat{z}^i
\end{equation}
for some $i_0\in\mathbb{Z}$, and $a_i\in\mathbb{C}\llbracket z^{-1} \rrbracket$.
We set 
\[
    \mathbb{C} \llbracket \hat{w} \rrbracket \langle \partial_{\hat{w}} \rangle = \mathbb{C} [ \hat{w} ] \langle \partial_{\hat{w}} \rangle \otimes_{\mathbb{C} [ \hat{w} ]} \mathbb{C} \llbracket \hat{w} \rrbracket .
\]
\begin{theorem}~\cite[Formal~stationary~phase]{GL}\label{thm:Fsp}
    There exists an isomorphism of $\mathbb{C} \llbracket \hat{w} \rrbracket \langle \partial_{\hat{w}} \rangle$-modules 
    \[
    \widehat{\mathbb{M}}_{\widehat{\infty}} \otimes_{\mathbb{C} [\hat{w}]\langle \partial_{\hat{w}} \rangle} \mathbb{C} \llbracket \hat{w} \rrbracket \langle \partial_{\hat{w}} \rangle \xrightarrow{\sim} \bigoplus_{c\in C \cup \{ \infty \}} \mathbb{M}^{\mu}_c. 
    \]
\end{theorem}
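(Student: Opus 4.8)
Write $\mathcal{R}$ for the ring obtained from $\mathbb{C}[z]\langle\partial_z\rangle$ by adjoining $\partial_z^{-1}$ and completing with respect to the $\partial_z^{-1}$-adic filtration---a ring of ``global formal microdifferential operators on $\mathbb{P}^1_z$'' whose $z$-adic localization at a point $c$ is exactly the ring $\mathcal{E}_c$ of \eqref{eq:micro}--\eqref{eq:micro2}. The plan is to reduce the formal structure of $\widehat{\mathbb{M}}$ at $\widehat{\infty}$ to $\mathcal{R}\otimes_{\mathbb{C}[z]\langle\partial_z\rangle}\mathbb{M}$, to show that the latter localizes onto $C\cup\{\infty\}$, and to recognize each local piece as $\mathbb{M}^{\mu}_c$.

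First, using the Fourier--Laplace anti-involution \eqref{eq:inv}---which makes $\widehat{\mathbb{M}}$ the $\mathbb{C}[\hat z]\langle\partial_{\hat z}\rangle$-module with $\hat z$ acting as $\partial_z$ and $\partial_{\hat z}$ acting as $-z$---I would pass to the chart $\hat w = \hat z^{-1}$ around $\widehat{\infty}$. There $\hat w$ corresponds to $\partial_z^{-1}$ and $\partial_{\hat w}=-\hat z^2\partial_{\hat z}$ becomes a definite operator in $z,\partial_z,\partial_z^{-1}$, so that completing $\hat w$-adically identifies $\widehat{\mathbb{M}}_{\widehat\infty}\otimes_{\mathbb{C}[\hat w]\langle\partial_{\hat w}\rangle}\mathbb{C}\llbracket\hat w\rrbracket\langle\partial_{\hat w}\rangle$ with $\mathcal{R}\otimes_{\mathbb{C}[z]\langle\partial_z\rangle}\mathbb{M}$. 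The only subtlety is matching the two charts of $\widehat{X}$ and checking that the product on $\mathcal{R}$ is compatible with Laplace transformation; this is the formal identity recorded after \eqref{eq:micro}. That $\widehat{\mathbb{M}}$ is a single holonomic module---so its germ at $\widehat\infty$ is well defined and sits in degree $0$---follows from the minimality of $\mathbb{M}_{\operatorname{min}}$, which kills the potential contributions of $R^{\pm1}\hat p_*$ in \eqref{eq:FL}.

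Second---this is the heart of the matter---I would show that $\mathcal{R}\otimes_{\mathbb{C}[z]\langle\partial_z\rangle}\mathbb{M}$ is the module of global sections of a sheaf on $\mathbb{P}^1_z$ supported on $C\cup\{\infty\}$, hence splits as the direct sum of its stalks there. The mechanism is that $\partial_z$ is \emph{invertible} in $\mathcal{R}$ (its inverse being the topologically nilpotent $\partial_z^{-1}$), so that at any point $z_0$ where $\mathbb{M}$ is a connection one has $\mathbb{M}_{z_0}\cong(\mathbb{C}\llbracket t\rrbracket,\operatorname{d})^{\oplus r}$ formally and therefore $\mathcal{R}_{z_0}\otimes\mathbb{M}_{z_0}\cong(\mathcal{R}_{z_0}/\mathcal{R}_{z_0}\partial_t)^{\oplus r}=0$. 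Concretely I would cover $\mathbb{P}^1_z$ by small disks $\Delta_c$ around the points of $C\cup\{\infty\}$ together with the Zariski-open $U=\mathbb{P}^1_z\setminus(C\cup\{\infty\})$, run the Mayer--Vietoris / \v{C}ech complex for $\mathcal{R}\otimes\mathbb{M}$ attached to this cover, and observe that the terms involving $U$ vanish by the preceding remark, leaving $\bigoplus_{c\in C\cup\{\infty\}}(\mathcal{R}\otimes\mathbb{M})_{\Delta_c}$. The quasi-isomorphism axiom of a good pair of lattices (the one used to define $\mathcal{F}^{0,\widehat\infty}$) together with holonomicity are what make the \v{C}ech computation collapse onto a single cohomological degree, compatibly with the degree count already used in the first step.

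Finally, for each $c$ I would identify $(\mathcal{R}\otimes\mathbb{M})_{\Delta_c}$ with $\mathcal{E}_c\otimes_{\mathbb{C}[z]\langle\partial_z\rangle}\mathbb{M}=\mathbb{M}^{\mu}_c$ in the local coordinate $t$ ($t=z-c$ for $c$ finite, $t=w$ for $c=\infty$), and check that the $\mathbb{C}\llbracket\hat w\rrbracket\langle\partial_{\hat w}\rangle$-action transported from the first step is the one induced on $\mathbb{M}^{\mu}_c$ by the inclusion $\mathbb{C}\llbracket\hat w\rrbracket\langle\partial_{\hat w}\rangle\hookrightarrow\mathcal{E}_c$, $\hat w\mapsto\partial_t^{-1}$, with $\partial_{\hat w}$ the corresponding microdifferential operator. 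At $c=\infty$ one must also track the change of variable $z\leftrightarrow w$ and the twist it produces on coefficients; this is where signs and normalizations have to be pinned down, and the outcome can be cross-checked against the slope-$1$ description of Theorem~\ref{thm:StationaryPhase} and the rank and Swan-conductor bookkeeping of Theorem~\ref{thm:LocalFourier}. Assembling the three steps yields the asserted isomorphism. I expect the second step---the localization over $\mathbb{P}^1_z$, in particular the vanishing of the generic contribution after $\partial_z^{-1}$-adic completion and the correct degree bookkeeping in the \v{C}ech complex---to be the main obstacle, the first and third steps being careful but essentially formal translations between \eqref{eq:FL}, \eqref{eq:inv} and the microdifferential formalism of Section~\ref{sec:microloc}.
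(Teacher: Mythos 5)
The paper does not prove this statement: Theorem~\ref{thm:Fsp} is quoted verbatim from Garc\'{\i}a-L\'opez~\cite{GL} (``Microlocalization and stationary phase''), and the authors only \emph{apply} it in Section~\ref{sec:dR}. So there is no in-paper proof to compare against; what can be said is how your sketch relates to the argument in the cited source.

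Your outline does capture the mechanism of the actual proof in~\cite{GL}: one identifies the $\hat w$-adic completion of $\widehat{\mathbb{M}}$ at $\widehat{\infty}$ with a $\partial_z^{-1}$-adic completion of $\mathbb{M}$ (your ring $\mathcal{R}$), observes that $\partial_z$ becomes invertible there so that the completed module is killed over the smooth locus of $\mathbb{M}$, and concludes that it is punctually supported on $C\cup\{\infty\}$ and hence splits as the direct sum of its local pieces, each of which is $\mathbb{M}^{\mu}_c$ by construction. Two points in your second step are imprecise, though, and they are exactly where the real work lies. First, the \v{C}ech/Mayer--Vietoris argument over a cover by analytic disks $\Delta_c$ together with a Zariski open $U$ mixes topologies and is not how the support statement is established; the correct argument is purely algebraic: one shows that $\mathcal{R}\otimes_{\mathbb{C}[z]\langle\partial_z\rangle}\mathbb{M}$ is a $z$-torsion module supported on the finite set $C\cup\{\infty\}$ (because locally at a smooth point $\mathbb{M}$ is $\mathcal{D}/\mathcal{D}\partial_t$-free and $\partial_t$ is a unit in $\mathcal{E}$), and a module supported on finitely many points is canonically the direct sum of its local components --- no cover is needed. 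Second, the ``quasi-isomorphism axiom of a good pair of lattices'' plays no role here; the degree-$0$ concentration of $\widehat{\mathbb{M}}$ comes from holonomicity and the absence of punctual sub- or quotient modules of $\mathbb{M}_{\operatorname{min}}$, and the identification of the completed coefficients $\mathbb{C}[z]$ with $\mathbb{C}\llbracket z-c\rrbracket$ at each $c$ (which you assert via ``$z$-adic localization'') is a consequence of, not an input to, the support computation. With those corrections your sketch is essentially the Garc\'{\i}a-L\'opez proof.
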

Applying this result in specific cases provides more refined information than Theorems~\ref{thm:LocalFourier} and~\ref{thm:StationaryPhase}. 
Namely, in addition to the numerical invariants $\operatorname{rk}, \operatorname{Sw}$, we also get the irregular types. 
We will work out such examples in Section~\ref{sec:dR}. 

\subsection{Local forms}

We will be interested in the special cases of this general theory when $\mathbb{M}$ is the $\mathcal{D}$-module over $\mathbb{C}(z)$ associated to a  meromorphic connection $(\mathcal{E},\nabla)$ of one of six specific local forms.

Consider a meromorphic connection $\nabla$ on a holomorphic vector bundle $\mathcal{E}\rightarrow X$ of rank three. 
We require $\nabla$ to have prescribed singularities at the points of $c_i\in C$ with irregular part
\[
Q_i=\sum_{k=-m_i}^{-1}A_{k-1}(z-c_i)^k,
\]
in some holomorphic coordinate chart $z-c_i$ centered at $c_i$, and local trivialization of $\mathcal{E}$. 
The eigenvalues $q_i$ of $Q_i$ are called the de Rham irregular types. 
Then the de Rham local form of $\nabla$ at $c_i$ is defined as
\begin{equation}\label{eq:locform}
\nabla=\operatorname{d}+\operatorname{d}\! Q_i+\Lambda_i\frac{\operatorname{d}\! z}{z-c_i}+O(1)\operatorname{d}\! z,
\end{equation}
where $\Lambda_i$ is some constant matrix called the residue, and $O(1)$ refers to the holomorphic part.
Let us give the specific divisors $D$ and irregular parts $Q_i$ that we will consider in the rest of the paper. 
In the first three cases JKTVI, JKTV, JKTIVa we have $D=\{0\}+2\cdot \{\infty\}$.
In these cases, the eigenvalues of $\operatorname{Res}_0 \nabla$ are fixed generic values. 
In the remaining cases JKTIVb, JKTII, JKTI we have $D=3\cdot \{\infty\}$. (Here $JKT*$ refers to the system described in \cite{JKTI} and \cite{JKTII}.) 
As for the normal forms, they are listed in the following formulas. 

\paragraph{JKTVI}
\begin{equation}
\label{connection1}\tag{JKTVI-dR}
    \nabla=\operatorname{d}+\left[
    \begin{pmatrix}
        a_0 & 0 & 0 \\
        0 & a_1 & 0 \\
        0 & 0 & a_2
    \end{pmatrix}w^{-2}+\begin{pmatrix}
        b_0 & 0 & 0 \\
        0 & b_1 & 0 \\
        0 & 0 & b_2
    \end{pmatrix}w^{-1}+O(1)
    \right]\otimes\textrm{d}w,
\end{equation}
where 
\[
b_0+b_1+b_2=\operatorname{Tr Res}_{0} \nabla
\]
because of the residue theorem. 

\paragraph{JKTV}
\begin{equation}
\label{connection2}\tag{JKTV-dR}
    \nabla=\operatorname{d}+\left[
    \begin{pmatrix}
        a_0 & 1 & 0 \\
        0 & a_0 & 0 \\
        0 & 0 & a_1
    \end{pmatrix}w^{-2}+\begin{pmatrix}
        0 & 0 & 0 \\
        b_0 & b_1 & 0 \\
        0 & 0 & b_2
    \end{pmatrix}w^{-1}+O(1)
    \right]\otimes\textrm{d}w,
\end{equation}
where 
\[
b_1+b_2=\operatorname{Tr Res}_{0} \nabla
\] 
because of the residue theorem.

\paragraph{JKTIVa}
\begin{equation}
\label{connection3}\tag{JKTIVa-dR}
    \nabla=\operatorname{d}+\left[
    \begin{pmatrix}
        a_0 & 1 & 0 \\
        0 & a_0 & 1 \\
        0 & 0 & a_0
    \end{pmatrix}w^{-2}+\begin{pmatrix}
        0 & 0 & 0 \\
        0 & 0 & 0 \\
        b_0 & b_1 & b_2
    \end{pmatrix}w^{-1}+O(1)
    \right]\otimes\textrm{d}w,
\end{equation}
where 
$b_2=\operatorname{Tr Res}_{0} \nabla$
because of the residue theorem.

\paragraph{JKTIVb}
\begin{equation}
\label{connection4}\tag{JKTIVb-dR}
    \nabla=\operatorname{d}+\left[
    \begin{pmatrix}
        a_0 & 0 & 0 \\
        0 & a_1 & 0 \\
        0 & 0 & a_2
    \end{pmatrix}w^{-3}+\begin{pmatrix}
        b_0 & 0 & 0 \\
        0 & b_1 & 0 \\
        0 & 0 & b_2
    \end{pmatrix}w^{-2}+\begin{pmatrix}
        c_0 & 0 & 0 \\
        0 & c_1 & 0 \\
        0 & 0 & c_2
    \end{pmatrix}w^{-1}+O(1)
    \right]\otimes\textrm{d}w,
\end{equation}
where $c_0+c_1+c_2=0$, because of the residue theorem.

\paragraph{JKTII}
\begin{equation}
\label{connection5}\tag{JKTII-dR}
    \nabla=\operatorname{d}+\left[
    \begin{pmatrix}
        a_0 & 1 & 0 \\
        0 & a_0 & 0 \\
        0 & 0 & a_1
    \end{pmatrix}w^{-3}+\begin{pmatrix}
        0 & 0 & 0 \\
        b_0 & b_1 & 0 \\
        0 & 0 & b_2
    \end{pmatrix}w^{-2}+\begin{pmatrix}
        0 & 0 & 0 \\
        c_0 & c_1 & 0 \\
        0 & 0 & c_2
    \end{pmatrix}w^{-1}+O(1)
    \right]\otimes\textrm{d}w,
\end{equation}
where $c_1+c_2=0$, because of the residue theorem.

\paragraph{JKTI}
\begin{equation}
\label{connection6}\tag{JKTI-dR}
    \nabla=\operatorname{d}+\left[
    \begin{pmatrix}
        a_0 & 1 & 0 \\
        0 & a_0 & 1 \\
        0 & 0 & a_0
    \end{pmatrix}w^{-3}+\begin{pmatrix}
        0 & 0 & 0 \\
        0 & 0 & 0 \\
        b_0 & b_1 & b_2
    \end{pmatrix}w^{-2}+\begin{pmatrix}
        0 & 0 & 0 \\
        0 & 0 & 0 \\
        c_0 & c_1 & c_2
    \end{pmatrix}w^{-1}+O(1)
    \right]\otimes\textrm{d}w,
\end{equation}
where $c_2=0$ because of the residue theorem.

\begin{theorem}~\cite{BB}\label{thm:moduli}
    For generically fixed numerical invariants $a_i, b_i, c_i$ (as appropriate), there exist moduli spaces ${\mathcal{M}}_{dR}^{JKT*}$ of gauge equivalence classes of $\beta$-stable integrable connections with irregular singularities of fixed irregular types and residues. 
    Moreover, they are complete and carry natural holomorphic symplectic structures. 
\end{theorem}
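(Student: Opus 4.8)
The statement bundles three claims together --- existence of ${\mathcal{M}}_{dR}^{JKT*}$ as a variety, its smoothness with a holomorphic symplectic form, and completeness --- and I would establish them in that order, following the standard machinery for moduli of irregular meromorphic connections on curves. \emph{Construction and smoothness.} Fix the divisor $D$ and, at each $c_i\in C$, the irregular type $Q_i$ together with the residue $\Lambda_i$ up to conjugacy with prescribed eigenvalues, as in \eqref{eq:locform}. The pairs $(\mathcal{E},\nabla)$ with these local forms have underlying bundle $\mathcal{E}$ ranging over a bounded family, since the existence of such a $\nabla$ bounds the Harder--Narasimhan slopes of $\mathcal{E}$ (via Deligne's canonical extensions). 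Hence one rigidifies --- twist so that $\mathcal{E}$ is globally generated, choose a basis of $H^0$ --- and represents all such data as the points of a quasi-projective scheme $R$ acted on by the reductive group of the rigidification, the conditions "fixed $Q_i$, fixed residue data" cutting out an invariant locally closed subscheme. Its GIT quotient, linearised by the parabolic weights $\beta$ determined by the residues, is quasi-projective, and over the $\beta$-stable locus it is an orbit space whose points are the gauge-equivalence classes in the statement; for generic $\beta$, stability coincides with semistability. Smoothness comes from deformation theory: infinitesimal deformations preserving the formal type are $\mathbb{H}^1$ of a two-term complex $\mathcal{E}nd'(\mathcal{E})\xrightarrow{[\nabla,\,\cdot\,]}\mathcal{E}nd''(\mathcal{E})\otimes K_X(D)$, with the primed sheaves the endomorphisms compatible with the fixed polar part; obstructions lie in $\mathbb{H}^2$ of the same complex, and $\beta$-stability forces the trace-free $\mathbb{H}^0$, hence by Serre duality $\mathbb{H}^2$, to vanish.

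\emph{Symplectic form.} The complex above is Serre-self-dual up to a $K_X$-twist after trace-free reduction, so the Serre pairing yields a nondegenerate alternating form $\omega$ on $\mathbb{H}^1 = T{\mathcal{M}}_{dR}^{JKT*}$. Here the residue-theorem constraints recorded after the normal forms (e.g.\ $b_0+b_1+b_2=\operatorname{Tr}\operatorname{Res}_0\nabla$ in \eqref{connection1}, $c_0+c_1+c_2=0$ in \eqref{connection4}, and their analogues in \eqref{connection2}, \eqref{connection3}, \eqref{connection5}, \eqref{connection6}) are exactly what makes this pairing descend to the trace-corrected complex. For closedness of $\omega$ I would use the usual symplectic-reduction argument: present $\omega$ as the reduction of the constant Atiyah--Bott form on the affine space of meromorphic connections by the gauge group, the fixed-formal-type and fixed-residue-eigenvalue conditions being precisely the moment-map constraints; the reduced form is then automatically closed and holomorphic.

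\emph{Completeness.} This analytic step is where I expect the real work to lie, and I would follow Biquard--Boalch \cite{BB}. One realises ${\mathcal{M}}_{dR}^{JKT*}$ as a complex structure on a hyperk\"ahler quotient: on $X\setminus C$ one solves the harmonic-metric equation for a Hermitian metric whose asymptotics near each $c_i$ are modelled on the irregular type $Q_i$ and the parabolic weights, and one proves existence, uniqueness, and the prescribed behaviour at the punctures; the resulting $L^2$-metric on the moduli space is then hyperk\"ahler and complete, with the $\omega$ above as its holomorphic symplectic form. The main obstacle is the a priori control of the harmonic metric near the irregular singularities and the properness of the exhaustion used in the completeness argument. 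A point to check for these specific normal forms is that the Jordan-block leading terms in \eqref{connection2}--\eqref{connection6} still give an irregular type covered by the wild non-abelian Hodge correspondence; for the listed forms this reduces to a genericity condition on the coefficients $a_i$ (and on the residue eigenvalues at $0$), which is precisely the meaning of "generically fixed numerical invariants" in the statement.
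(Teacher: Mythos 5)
The paper does not actually prove this statement: it is quoted as a twisted special case of \cite[Theorem~0.2]{BB}, with the details deferred to the companion paper \cite[Theorem~2.1]{ESz4}. Your outline is a faithful reconstruction of the Biquard--Boalch strategy that the citation points to (deformation complex and Serre duality for the symplectic form, wild harmonic metrics and the complete hyperk\"ahler $L^2$-metric for completeness), so in spirit you are on the same track as the source. The one place where I would push back is your closing remark that applicability of the wild non-abelian Hodge correspondence to the normal forms \eqref{connection2}--\eqref{connection6} ``reduces to a genericity condition on the coefficients $a_i$.'' It does not: the Jordan-block leading terms there produce \emph{ramified} (twisted) irregular types --- the slopes $1/2$, $2/3$, $3/2$, $5/3$ in Table~\ref{tab:slopes} are fractional --- and such types lie outside the literal hypotheses of \cite[Theorem~0.2]{BB}, which assumes an unramified irregular part with semisimple, pairwise-distinct-eigenvalue leading coefficient. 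No choice of generic $a_i$ removes the ramification; what is needed is the twisted extension of the correspondence (e.g.\ passing to a local cyclic cover $u^d=t$ where the exponential factors become unramified, and descending the harmonic-metric and moduli constructions), which is precisely the content the paper outsources to \cite{ESz4}. Your GIT construction of the coarse space is also a different (algebraic) route from the analytic quotient in \cite{BB}, but that is a legitimate alternative; the twisted-type issue is the genuine gap to close.
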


This is a twisted special case of~\cite[Theorem~0.2]{BB}. 
We give more details in our companion paper~\cite[Theorem~2.1]{ESz4}. 
This statement does not play a crucial role in the computations of Section~\ref{sec:dR}, but it is needed to state Proposition~\ref{prop:dRisomorphism}.
Regarding the dimension, note that the constructions of the diffeomorphic wild character varieties as affine cubic surfaces over $\mathbb{C}$ are given in our companion paper \cite{ESz2}.

These local forms provide us some crucial data about $\mathbb{M}_{\infty}$, namely the slopes in $\mathbb{M}_{\infty}$. These are summarized in the following table for generic values of $a_i,b_i,c_i$.

\begin{center}
\begin{tabular}{ c|c|c|c|c|c|c } 
 case & $JKTVI$ & $JKTV$ & $JKTIVa$ & $JKTIVb$ & $JKTII$ & $JKTI$ \\ 
 \hline
 slopes in $\mathbb{M}_{\infty}$ & $1,1,1$ & $1/2,1$ & $2/3$ & $2,2,2$ & $3/2,2$ & $5/3$ \\ 
\end{tabular}
\captionof{table}{}
\label{tab:slopes}
\end{center}

\section{de Rham spaces}\label{sec:dR}

We show how Fourier--Laplace transformation (with suitable choices of parameters) maps the moduli spaces of irregular connections that we consider bijectively to the well-known Painlev\'e phase spaces. 
Once this is done, just as in~\cite{Sz_Plancherel}, it follows from the algebraic definition~\eqref{eq:FL} of Fourier--Laplace transformation, that: 
\begin{prop}\label{prop:dRisomorphism}
    Fourier--Laplace transformation establishes an algebraic isomorphism with respect to the de Rham complex structures between each $\mathcal{M}_{\textrm{dR}}^{JKT*}$ and the corresponding $\mathcal{M}_{\textrm{dR}}^{P*}$. 
\end{prop}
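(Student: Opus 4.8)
The plan is to reduce the statement to the explicit computations carried out in the remainder of this section together with the algebraic nature of the functor $\mathcal{F}$, exactly as in~\cite{Sz_Plancherel}.

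First I would, for each of the six cases, compute the formal type of $\widehat{\mathbb{M}}=\mathcal{F}(\mathbb{M})$ at every one of its singular points when $\mathbb{M}$ is the $\mathcal{D}$-module attached to a local form among \eqref{connection1}--\eqref{connection6}. The tools are the local Fourier transformations $\mathcal{F}^{0,\widehat\infty},\mathcal{F}^{\infty,\hat 0},\mathcal{F}^{\infty,\widehat\infty}$ together with Theorems~\ref{thm:LocalFourier} and~\ref{thm:StationaryPhase} for the numerical data (ranks, Swan conductors, location of the new finite singular points), and the formal stationary phase isomorphism of Theorem~\ref{thm:Fsp} for the irregular types themselves. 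One reads off, case by case and using the slopes recorded in Table~\ref{tab:slopes}, that after a suitable identification of the parameters $a_i,b_i,c_i$ with those of the corresponding Painlev\'e system, $\mathcal{F}$ sends a rank $3$ connection of type JKT$*$ to a rank $2$ connection of Painlev\'e type P$*$ with matching residue eigenvalues and irregular types, and that the assignment is a bijection between the sets of closed points of $\mathcal{M}_{\textrm{dR}}^{JKT*}$ and $\mathcal{M}_{\textrm{dR}}^{P*}$. Since by the Fourier--Laplace anti-involution~\eqref{eq:inv} the inverse of $\mathcal{F}$ is again a Fourier--Laplace transformation, up to the involution $z\mapsto -z$ which acts trivially on the moduli spaces, the same computation run in the opposite direction shows that $\mathcal{F}^{-1}$ carries P$*$-connections to JKT$*$-connections.

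Next I would upgrade this bijection to a morphism of varieties. The definition~\eqref{eq:FL} of $\mathcal{F}$ uses only the relative pullback $p^{*}$, tensoring with the fixed algebraic $\mathcal{D}$-module $\psi$, the minimal extension along $C$, and the relative pushforward $R\hat p_{*}$; each of these operations makes sense over an arbitrary base scheme $S$ and commutes with base change. For the connections at hand the minimal extension is just the given extension, and $\mathcal{F}$ is an equivalence onto its image, so it preserves irreducibility and hence the $\beta$-stability of Theorem~\ref{thm:moduli}. Consequently $\mathcal{F}$ transforms a flat family of $\beta$-stable JKT$*$-connections over $S$ into a flat family of $\beta$-stable P$*$-connections over $S$; in other words it defines a natural transformation between the moduli functors, which induces a morphism of coarse moduli spaces $\Phi\colon\mathcal{M}_{\textrm{dR}}^{JKT*}\to\mathcal{M}_{\textrm{dR}}^{P*}$ whose effect on closed points is the bijection of the previous step. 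Applying the same reasoning to $\mathcal{F}^{-1}$ produces a morphism $\Psi\colon\mathcal{M}_{\textrm{dR}}^{P*}\to\mathcal{M}_{\textrm{dR}}^{JKT*}$ in the other direction. Because $\mathcal{F}^{-1}\circ\mathcal{F}$ and $\mathcal{F}\circ\mathcal{F}^{-1}$ are the identity functors up to the trivial involution, the compositions $\Psi\circ\Phi$ and $\Phi\circ\Psi$ are the identity on closed points, hence equal to the identity morphisms by the (coarse) moduli property, so $\Phi$ is biregular. As everything was done in the algebraic category, using only the $\mathcal{D}$-module structure and never the Stokes or monodromy data, $\Phi$ is an isomorphism precisely for the de Rham complex structures.

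The main obstacle is the first step, together with the care needed to make it work in families. Concretely, one has to pin down the exact dictionary between the numerical invariants $a_i,b_i,c_i$ (subject to the residue-theorem constraints listed after each normal form) and the parameters of the Painlev\'e systems, and verify that this dictionary identifies the full parameter space of $\mathcal{M}_{\textrm{dR}}^{JKT*}$ with that of $\mathcal{M}_{\textrm{dR}}^{P*}$ --- matching, via Theorems~\ref{thm:LocalFourier}--\ref{thm:Fsp}, the rank and Swan-conductor jumps, the regular singular point created at $-q_j$ for each slope-$1$ summand $q_jw^{-1}$ of $\mathbb{M}_\infty$, and the residue data of the minimal extension. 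One also needs the relative local Fourier transformations to commute with base change and to preserve $\beta$-stability, so that the transform of a family is again a family. Once this dictionary and the family statement are in place, the passage from ``bijective morphism'' to ``biregular isomorphism'' is formal, because the inverse functor $\mathcal{F}^{-1}$ is available.
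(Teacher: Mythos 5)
Your proposal follows essentially the same route as the paper: a case-by-case determination of the formal data of $\widehat{\mathbb{M}}$ via Theorems~\ref{thm:LocalFourier}, \ref{thm:StationaryPhase} and~\ref{thm:Fsp} (formal microlocalization) showing that each JKT$*$ local form is carried to the JMU representation of the corresponding Painlev\'e equation, combined with the algebraicity of the definition~\eqref{eq:FL} in families (as in~\cite{Sz_Plancherel}) and the relation $\mathcal{F}^2=(-1)^*$ to obtain the inverse morphism. Your second paragraph merely spells out in more detail the functorial/base-change argument that the paper delegates to the reference, so the approaches coincide.
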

This is a more detailed statement of Theorem~\ref{thm:FLisom}. 
It also provides interesting examples of the isomorphisms of~\cite{Boa5}. To provide this isomorphism, we will need to investigate the Fourier-Laplace transformation only in one direction (from $JKT$ to Painlev\'e), because of the following well known statement.

\begin{prop}
    For the square of the Fourier-Laplace transform $\mathcal{F}$ of $\mathcal{D}$-modules, we have $\mathcal{F}^2=(-1)^*$, i.e. pull-back by the map $(-1)\colon z\mapsto -z$.
\end{prop}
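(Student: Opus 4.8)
The plan is to reduce to the algebraic description of $\mathcal{F}$ furnished by the Fourier--Laplace anti-involution~\eqref{eq:inv}, and then merely to iterate that substitution. First I would invoke the classical fact (Malgrange, Chapitre~VI) that for a holonomic $\mathcal{D}_X$-module $\mathbb{M}$ having no sub- or quotient-module supported in dimension $0$ --- in particular for the modules attached to the irreducible meromorphic connections of Section~\ref{sec:dR} --- the geometric transform~\eqref{eq:FL} restricts over $\operatorname{Spec}\mathbb{C}[z]$ to the $\mathbb{C}[\hat z]\langle\partial_{\hat z}\rangle$-module obtained from $\mathbb{M}$ by the substitution $z\mapsto-\partial_{\hat z}$, $\partial_z\mapsto\hat z$; the minimal-extension prescription $\mathbb{M}_{\operatorname{min}}$ in~\eqref{eq:FL} is precisely what makes $\widehat{\mathbb{M}}$ again of this type, so that $\mathcal{F}$ can be applied a second time and the square is well defined.

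Next I would compose the substitution with itself. Writing $\zeta$ for the coordinate on the double dual, the second application sends $\hat z\mapsto-\partial_\zeta$, $\partial_{\hat z}\mapsto\zeta$, so the composite acts by $z\mapsto-\partial_{\hat z}\mapsto-\zeta$ and $\partial_z\mapsto\hat z\mapsto-\partial_\zeta$. Under the canonical identification of the double dual with $X$, $\zeta=z$, this is exactly the automorphism $z\mapsto-z$, $\partial_z\mapsto-\partial_z$ of $\mathbb{C}[z]\langle\partial_z\rangle$, which is precisely the automorphism inducing the pull-back functor $(-1)^*$ along the involution $z\mapsto-z$ of $\CP1$. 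Since $(-1)$ is a biregular automorphism of $\CP1$ fixing $\infty$, the resulting isomorphism over $\operatorname{Spec}\mathbb{C}[z]$ is compatible with the structure at $\infty$ as well; and since $\mathcal{F}^2\mathbb{M}$ and $(-1)^*\mathbb{M}$ both have no punctual sub- or quotient-modules, each is the minimal extension of its restriction, so the isomorphism is one of $\mathcal{D}_X$-modules.

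A more intrinsic variant is to compute the kernel of $\mathcal{F}\circ\mathcal{F}$ directly: on $X\times\widehat X\times\widehat{\widehat X}$ one has $p_{12}^*\psi\otimes p_{23}^*\psi=(\mathcal{O},\operatorname{d}+\operatorname{d}(z\hat z+\hat z\zeta))$, and integration along the middle factor (formal stationary phase, i.e.\ Fourier inversion) localizes the push-forward onto $\{z+\zeta=0\}$; combined with the two shifts $[1]$ and the dimension count this yields exactly the $\mathcal{D}$-module supported on the graph of $(-1)$ representing $(-1)^*$. This matches the $\ell$-adic prototype of~\cite{Lau} up to the absence of a Tate twist.

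The main obstacle is the bookkeeping needed to match the geometric definition~\eqref{eq:FL} --- with its minimal extension, its shift $[1]$, and the behaviour at $\hat 0$ and $\widehat\infty$ that~\eqref{eq:inv} does not record --- to the clean Weyl-algebra substitution: one must check that no spurious twist by a rank-one connection or by a punctual module creeps in, and that the signs in the kernel $\psi$ and in~\eqref{eq:inv} are the ones producing $(-1)^*$ rather than the identity or a transpose. Once these conventions are pinned down, the computation of $\mathcal{F}^2$ is the two-line substitution above.
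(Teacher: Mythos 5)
The paper offers no proof of this proposition at all --- it is invoked as a ``well known statement'' --- so there is nothing to compare against except the standard argument, which is essentially what you give. Your core computation is correct: iterating the anti-involution~\eqref{eq:inv} gives $z\mapsto-\partial_{\hat z}\mapsto-\zeta$ and $\partial_z\mapsto\hat z\mapsto-\partial_\zeta$, which under the identification $\zeta=z$ is exactly the algebra automorphism induced by pull-back along $z\mapsto-z$ (note $\partial_z\mapsto-\partial_z$ is forced by $\operatorname{d}(-z)=-\operatorname{d}z$, so the signs do come out to $(-1)^*$ and not the identity). The one point you flag but do not fully discharge is the only one with real content: the geometric definition~\eqref{eq:FL} applies the minimal-extension functor \emph{before} each transform, so $\mathcal{F}^2$ as literally defined agrees with the iterated Weyl-algebra substitution only if $\widehat{\mathbb{M}}$ is itself already a minimal extension. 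This is not automatic for arbitrary holonomic $\mathbb{M}$ without punctual subquotients --- for instance $\mathcal{F}$ of the trivial connection $(\mathcal{O},\operatorname{d})$ is a punctual module, so a module with an $\mathcal{O}$-type subquotient transforms to one with a punctual subquotient, and the second minimalization would then destroy exact inversion. For the irreducible connections of rank $\geq 2$ considered in the paper this degeneracy cannot occur, and under that (implicit) hypothesis your argument is complete; it would be worth stating the hypothesis explicitly. Also a small slip: the transform lives over $\operatorname{Spec}\mathbb{C}[\hat z]$, not $\operatorname{Spec}\mathbb{C}[z]$. Your alternative kernel-composition argument is the $\mathcal{D}$-module analogue of Laumon's $\ell$-adic involutivity and is likewise standard and correct in outline.
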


We will use the statement of the following lemma widely in this Section. Actually, it is a well-known fact, with several appearances in the literature, e.g. \cite{Lev}; \cite[Section~2]{Tur}, Theorem~I; \cite[Theorem~11.1]{Was}.

\begin{lemma}
    The irregular part at infinity $\widehat{\mathbb{M}}_{\widehat{\infty}}$ of the Fourier--Laplace transform of a holonomic $\mathcal{D}$-module $\mathbb{M}$ determined by a meromorphic connection of the form~\eqref{eq:locform} is independent of the residue $\Lambda_i$ and the holomorphic terms.% $O(1) \operatorname{d}\! z$.
    %When describing the action of differential operator $\partial_z$ (or $\partial_w$) only irregular terms of the matrix form \eqref{eq:locform} matter formally, and regular Laurent terms can be removed after finite base change (or after appropriate gauge transformation), up to regular singular residual terms.
\end{lemma}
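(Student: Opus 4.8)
The plan is to isolate, from the Hukuhara--Levelt--Turrittin data, exactly the part that governs $\widehat{\mathbb{M}}_{\widehat\infty}$ and to observe that this part is a function of the polar (irregular) part of the connection only. The local Fourier transform $\mathcal{F}^{\bullet,\widehat\infty}$ applied at each singular point, together with the formal stationary phase isomorphism of Theorem~\ref{thm:Fsp}, expresses $\widehat{\mathbb{M}}_{\widehat\infty}$ (after completion) as $\bigoplus_{c\in C\cup\{\infty\}}\mathbb{M}^{\mu}_c$. So it suffices to show that each microlocalized module $\mathbb{M}^{\mu}_c$, hence its formal type, is unchanged if we replace $\nabla$ by a connection with the same irregular part $\operatorname{d} Q_i$ but arbitrary residue $\Lambda_i$ and arbitrary holomorphic tail $O(1)\operatorname{d} z$. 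The cleanest way to package this is to work on the Galois cover~\eqref{eq:extension}: after pullback to $K$, the connection splits as in~\eqref{eq:HLT} into rank-one pieces $(\mathbb{C}\llbracket u\rrbracket,\operatorname{d}+\operatorname{d} q)$ twisted by regular singular $\mathcal{F}_q$, and the exponential factors $q$ — which are precisely the de Rham irregular types, i.e. the eigenvalues of $Q_i$ read in the ramified coordinate — are determined by $\operatorname{d} Q_i$ alone, while $\Lambda_i$ and the holomorphic terms only modify the regular singular factors $\mathcal{F}_q$ and the residual eigenvalues.

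The key steps, in order, are: (1) Reduce to a statement about the formal/microlocal type at each $c$, using Theorem~\ref{thm:Fsp} and exactness of local Fourier transformation (Theorem~\ref{thm:LocalFourier}); the irregular part of $\widehat{\mathbb{M}}_{\widehat\infty}$ is the direct sum of the irregular parts of the $\mathbb{M}^{\mu}_c$, and completion does not affect the irregular type. (2) Compute $\mathbb{M}^{\mu}_c=\mathcal{E}_c\otimes_{\mathbb{C}[z]\langle\partial_z\rangle}\mathbb{M}$ after the Turrittin decomposition: since $\mathcal{E}_c$ is flat over $\mathbb{C}[z]\langle\partial_z\rangle$ and microlocalization commutes with the finite base change to $K$ in~\eqref{eq:extension}, the decomposition~\eqref{eq:HLT} is inherited by $\mathbb{M}^{\mu}_c$, each summand $(\mathbb{C}\llbracket u\rrbracket,\operatorname{d}+\operatorname{d} q)\otimes\mathcal{F}_q$ contributing a microlocalized summand whose exponential factor is computed from $q$ by the microdifferential product rule~\eqref{eq:micro} (equivalently, by the classical formal stationary phase / critical-point change of variables). (3) Observe that the collection of exponential factors $\{q\}$, together with their slopes, is by construction determined by $\operatorname{d} Q_i$ alone — this is exactly the content of the HLT theorem as cited from \cite{Lev},~\cite{Tur},~\cite{Was}: the polar part of $\nabla$ fixes the $q$'s, and the residue $\Lambda_i$ together with the holomorphic terms enters only through the regular singular pieces $\mathcal{F}_q$ (and, for purely diagonal local forms such as~\eqref{connection1},~\eqref{connection4}, one sees this completely explicitly: the off-diagonal and higher-order-in-$z$ terms can be gauged into $O(1)$ without touching $\operatorname{d} Q_i$). (4) Conclude that the irregular part of $\widehat{\mathbb{M}}_{\widehat\infty}$, which depends on the $\mathbb{M}^{\mu}_c$ only through their exponential factors (slopes and leading terms of each $q$, transformed), is a function of $\{\operatorname{d} Q_i\}_i$ alone, hence independent of $\Lambda_i$ and the holomorphic terms.

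I expect the main technical obstacle to be step~(2): making precise that formal microlocalization is compatible with the ramified base change $\mathbb{C}(\!(t)\!)\hookrightarrow K$ and with the direct-sum decomposition, so that one may genuinely "microlocalize each Turrittin summand separately." The subtlety is that $\mathcal{E}_c$ and its product~\eqref{eq:micro} are written in the unramified coordinate, so one must either check that $\mathcal{E}_c\otimes K$ still decomposes the module compatibly, or argue at the level of Newton polygons (the sides of the Newton polygon of $\mathbb{M}_c$ with respect to $t,t^{-1}\partial_t$, by the remark after the Swan conductor definition, are literally read off from the polar part, and formal stationary phase transforms these sides by an explicit affine rule — see \cite[Section~V.3]{Sab_Frobenius},~\cite{GL}). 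Once this compatibility is granted, the rest is bookkeeping with slopes and leading exponential coefficients, with the residue and holomorphic data sitting inertly inside the regular singular factors $\mathcal{F}_q$, which carry no irregular part and therefore contribute nothing to $\widehat{\mathbb{M}}_{\widehat\infty}$'s irregular type.
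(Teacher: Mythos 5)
Your proposal is correct in substance, but you should know that the paper does not actually prove this lemma at all: it is stated as a well-known fact and delegated to the classical formal reduction theory (\cite{Lev}; \cite[Section~2]{Tur}; \cite[Theorem~11.1]{Was}). What you have written is essentially the proof the paper omits, assembled from the same two ingredients the paper deploys case by case later in Section~\ref{sec:dR}: the stationary phase isomorphism of Theorem~\ref{thm:Fsp}, which reduces the irregular part of $\widehat{\mathbb{M}}_{\widehat{\infty}}$ to the microlocalized modules $\mathbb{M}^{\mu}_c$, and the fact that the local transform of an elementary summand $(\mathbb{C}\llbracket u \rrbracket,\operatorname{d}+\operatorname{d}\!q)\otimes\mathcal{F}_q$ has exponential factor depending only on $q$, with the regular factor $\mathcal{F}_q$ (where $\Lambda_i$ lives) contributing nothing irregular --- this is made explicit by Fang's Legendre-type formulas \cite{Fang}, which the paper also cites. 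The trade-off is clear: the paper's citation buys brevity at the cost of an argument; your version buys a derivation uniform with the paper's own computational method. Two points deserve more care than you give them. First, in your step (3) the exponential factors are \emph{not} the eigenvalues of $Q_i$ in the ramified cases \eqref{connection2}, \eqref{connection3}, \eqref{connection5}, \eqref{connection6} (e.g.\ in JKTIVa the leading coefficient is nilpotent yet the slope is $2/3$); the correct statement is that after base change to $K$ in \eqref{eq:extension} the residue term $\Lambda_i t^{-1}\operatorname{d}\!t = d\,\Lambda_i u^{-1}\operatorname{d}\!u$ integrates to a logarithm and the holomorphic terms to elements of $\mathbb{C}\llbracket u\rrbracket$, so neither can alter the class of $q$ in $K/\mathbb{C}\llbracket u\rrbracket$ --- this, rather than "one sees it explicitly in the diagonal cases," is the precise reason the lemma holds, and it is exactly what the Hukuhara--Levelt--Turrittin splitting procedure guarantees. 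Second, the obstacle you flag in step (2) --- compatibility of $\mathcal{E}_c\otimes(-)$ with the ramified base change and with the direct sum \eqref{eq:HLT} --- is genuine but standard, and is resolved in \cite{GL} and \cite[Section~V.3]{Sab_Frobenius}, where the microlocalization of each elementary summand is computed in closed form; with those references supplied, your argument closes.
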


We note that Fang~\cite{Fang} expressed general formulas for the irregular type of local Fourier transformations using essentially a version of Legendre transformation, from which the results of this section could be derived too. 

\subsection{Cases with a pole of order $2$ at infinity}

The following lemma is valid in the cases $JKTVI$, $JKTV$, and $JKTIVa$, in which there is a logarithmic singularity at $0$, under some special assumption. Moreover, it accurately describes the structure of the module $\widehat{\mathbb{M}}_{\widehat{\infty}}$.

\begin{lemma}\label{prop:inf}
    In cases $JKTVI$, $JKTV$, and $JKTIVa$, if the residue of the connection matrix at $0$ is semisimple with one eigenvalue equal to $0$, then:
    \begin{itemize}
        \item[i)]  $\mathcal{F}^{\infty,\widehat{\infty}}(\mathbb{M})=0$.
        \item[ii)] $\operatorname{rk}(\mathcal{F}^{0,\widehat{\infty}}(\mathbb{M}))=2$, and $\operatorname{rk}(\widehat{\mathbb{M}}_{\widehat{\infty}})=2$. 
        \item[iii)] $\widehat{\mathbb{M}}$ has a regular singularity at $\widehat{\infty}$. 
    \end{itemize}
    \begin{proof}
        \begin{itemize}
            \item[i)] This follows immediately from Theorem~\ref{thm:LocalFourier}~\eqref{thm:LocalFourier3}, since in the cases $JKTVI$, $JKTV$, and $JKTIVa$, all slopes are $\lambda\leq1$ in $\mathbb{M}_{\infty}$ (see Table~\ref{tab:slopes}).
            \item[ii)] The proof is based on the technique of formal microlocalization, introduced in Subsection~\ref{sec:microloc}. More precisely, we are interested in the structure of the formal microlocalized module $\mathbb{M}_0^{\mu}$ (as a consequence of $i)$, we are not interested in $\mathbb{M}_{\infty}^{\mu}$ in this case).

            We can describe the singular module $\mathbb{M}_0$ as a rank three free module, divided by some defining relations, namely
            \begin{gather*}
                \mathbb{M}_0\cong\bigoplus_{i=1}^3\mathbb{C} \llbracket z \rrbracket \langle \partial_z \rangle q_i/(\partial_z,(\partial_z-\lambda_2z^{-1}),(\partial_z-\lambda_3z^{-1}))\cong \\
                \cong \mathbb{C} \llbracket z \rrbracket \langle \partial_z \rangle q_1/(\partial_z)\oplus \mathbb{C} \llbracket z \rrbracket \langle \partial_z \rangle q_2/(\partial_z-\lambda_2z^{-1})\oplus \mathbb{C} \llbracket z \rrbracket \langle \partial_z \rangle q_3/(\partial_z-\lambda_3z^{-1}),
            \end{gather*}
            where $q_i$ are suitable generators (namely, eigenvectors of the polar parts of the local forms of the connection at $0$), and the relations arise from the assumption on the residue at $0$. Notice that neither $\partial_z$ or $(\partial_z-\lambda_iz^{-1})$ ($i=2,3$) are units in $\mathbb{C} \llbracket z \rrbracket \langle \partial_z \rangle$, therefore, the direct summands are nonempty.

            On the other hand, in the course of the microlocalization, we consider the tensor product by 
            \[
            \mathcal{E}_0\cong\mathbb{C} \llbracket z \rrbracket \llbracket \hat{w} \rrbracket \langle \partial_z \rangle\cong \mathbb{C} \llbracket z \rrbracket \llbracket \hat{w} \rrbracket[\hat{w}^{-1}]
            \]
            (see \eqref{eq:micro}), and we have
            \[
            \mathcal{E}_0\otimes\mathbb{C} \llbracket z \rrbracket \langle \partial_z \rangle\cong\mathcal{E}_0.
            \]
            Therefore, the microlocalized (formal) module is
            \begin{equation}\label{eq:decomp1}
            \mathbb{M}_0^{\mu}\cong\mathcal{E}_0q_1/(\partial_z)\oplus \mathcal{E}_0 q_2/(\partial_z-\lambda_2z^{-1})\oplus \mathcal{E}_0 q_3/(\partial_z-\lambda_3z^{-1})
            \end{equation}
            Crucially now, as opposed to $\mathbb{C} \llbracket z \rrbracket \langle \partial_z \rangle$, in the ring $\mathcal{E}_0$ the element $\partial_z$ is a unit, because of the formal identification of $\partial_z^{-1}$ and $\hat{w}$. This means that the first direct summand $\mathcal{E}_0q_1/(\partial_z)=0$. 
            For $i=2,3$, the (formal) inverse of $(\partial_z-\lambda_iz^{-1})$ is, by repeated use of the noncommutative product rule of $\mathcal{E}_0$: 
            \begin{align*}
            (\partial_z-\lambda_iz^{-1})^{-1} & =\sum_{k=0}^{\infty}\lambda_i^k(\partial_z^{-1}z^{-1})^k\partial_z^{-1} \\
            & = \sum_{k=0}^{\infty}\lambda_i^k \left( \sum_{m = 0}^{\infty} m! z^{-1-m} \hat{w}^{m+1} \right)^k \hat{w} \\
            & = \hat{w} + \lambda_i z^{-1} \hat{w}^2 + \sum_{k=2}^{\infty} a_k z^{-k} \hat{w}^{k+1}, 
            \end{align*}
            for some $a_k\in \mathbb{C}$.  
            Since $\lambda_i\neq 0$, this series does not belong to $\mathcal{E}_0$, because it contains at least one negative power of $z$. %infinitely many  in $\mathcal{E}_0$. 
            Therefore, the latter two direct summands are nonzero in \eqref{eq:decomp1}.
            
            It now follows that $\operatorname{rk}(\mathbb{M}_0^{\mu})=2$, and as a consequence of Theorem~\ref{thm:StationaryPhase}, we have $\operatorname{rk}(\widehat{\mathbb{M}}_{\widehat{\infty}})=2$.

            \item[iii)] In $\mathbb{M}_0$, for $i=2,3$ we have
            \[
            \partial_zq_i=\lambda_iz^{-1}q_i
            \]
            Applying the Fourier--Laplace anti-involution \eqref{eq:inv}, this becomes
            \[
            \hat{z}q_i=\lambda_i(-\partial_{\hat{z}})^{-1}q_i
            \]
            in $\widehat{\mathbb{M}}_{\widehat{\infty}}$ (or in $\mathbb{M}_0^{\mu}$). Rearranging this equation, we have:
            \[
            -\lambda_iq_i=\partial_{\hat{z}}\hat{z}q_i=-\hat{w}^2\partial_{\hat{w}}\hat{w}^{-1}q_i=-\hat{w}^2((-\hat{w})^{-2}q_i + \hat{w}^{-1}\partial_{\hat{w}}q_i)=q_i-\hat{w}\partial_{\hat{w}}q_i,
            \]
            where we used the identity $\partial_{\hat{z}}=-\hat{w}^2\partial_{\hat{w}}$. From this we get
            \[
            \partial_{\hat{w}}q_i=(\lambda_i+1)\hat{w}^{-1}q_i.
            \]
            This shows that we have a logarithmic singularity at $\widehat{\infty}$, where the polar part of the connection matrix is
            \[
            \begin{pmatrix}
                \lambda_2+1 & 0 \\
                0 & \lambda_3+1
            \end{pmatrix}\hat{w}^{-1}\otimes\operatorname{d}\! \hat{w}.
            \]
        \end{itemize}
    \end{proof}
\end{lemma}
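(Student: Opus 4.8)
The plan is to prove the three items separately, deducing (ii) from a direct computation of the formal microlocalization $\mathbb{M}^{\mu}_0$ and (iii) from the surviving first-order equations produced along the way.

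For (i) I would simply invoke Theorem~\ref{thm:LocalFourier}~\eqref{thm:LocalFourier3}: the functor $\mathcal{F}^{\infty,\widehat{\infty}}$ is exact and annihilates every pure summand of $\mathbb{M}_{\infty}$ of slope $\leq 1$. By Table~\ref{tab:slopes}, in cases $JKTVI$, $JKTV$, $JKTIVa$ every slope at $\infty$ is $\leq 1$, so after decomposing $\mathbb{M}_{\infty}$ into its pure isotypic parts (needed because in case $JKTV$ it is not pure) and using exactness one obtains $\mathcal{F}^{\infty,\widehat{\infty}}(\mathbb{M})=0$.

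For (ii) I would work with the formal module $\mathbb{M}_0$ at the logarithmic point $0$. Under the genericity hypothesis on $\operatorname{Res}_0\nabla$ (semisimple, non-resonant, with one eigenvalue $0$) the holomorphic terms of~\eqref{eq:locform} can be gauged away, and $\mathbb{M}_0$ splits as an orthogonal sum of three rank-one formal connections: the trivial one $\partial_z q_1 = 0$, and two genuinely regular singular ones $\partial_z q_i = \lambda_i z^{-1} q_i$ with $\lambda_i \neq 0$. Tensoring with $\mathcal{E}_0$ preserves this splitting. Since $\partial_z = \hat{w}^{-1}$ is a unit in $\mathcal{E}_0$ (cf.~\eqref{eq:micro}), the trivial summand dies: $\mathcal{E}_0 q_1/(\partial_z)=0$. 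For $i=2,3$ I would compute the would-be inverse $(\partial_z - \lambda_i z^{-1})^{-1}$ by repeatedly applying the noncommutative product of $\mathcal{E}_0$; it begins $\hat{w} + \lambda_i z^{-1}\hat{w}^2 + \cdots$ and so involves negative powers of $z$, hence does \emph{not} lie in $\mathcal{E}_0$ --- thus each of these two summands is nonzero, of microlocal rank one. This gives $\operatorname{rk}(\mathbb{M}^{\mu}_0)=2$, whence also $\operatorname{rk}(\mathcal{F}^{0,\widehat{\infty}}(\mathbb{M}))=2$. Feeding this into the formal stationary phase isomorphism (Theorem~\ref{thm:Fsp}), $\widehat{\mathbb{M}}_{\widehat{\infty}}$ (after completion) is $\mathbb{M}^{\mu}_0 \oplus \mathbb{M}^{\mu}_{\infty}$; by (i) the slope-$>1$ contribution at $\infty$ vanishes and by Theorem~\ref{thm:StationaryPhase} the slope-$1$ summands of $\mathbb{M}_{\infty}$ only create regular singular points at finite locations, so $\operatorname{rk}(\widehat{\mathbb{M}}_{\widehat{\infty}})=2$.

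For (iii) I would push the two surviving relations $\partial_z q_i = \lambda_i z^{-1} q_i$ ($i=2,3$) through the Fourier--Laplace anti-involution~\eqref{eq:inv} to get $\hat{z} q_i = -\lambda_i \partial_{\hat{z}}^{-1} q_i$, i.e. $\partial_{\hat{z}}\hat{z} q_i = -\lambda_i q_i$; rewriting at $\widehat{\infty}$ in the coordinate $\hat{w}=\hat{z}^{-1}$ via $\partial_{\hat{z}}=-\hat{w}^2\partial_{\hat{w}}$ yields $\partial_{\hat{w}} q_i = (\lambda_i+1)\hat{w}^{-1} q_i$, a logarithmic pole with residue $\operatorname{diag}(\lambda_2+1,\lambda_3+1)$. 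Combined with the rank-$2$ statement of (ii), this exhibits $\widehat{\mathbb{M}}$ as regular singular at $\widehat{\infty}$. The hard part will be the microlocal step in (ii): pinning down the rank-one decomposition of $\mathbb{M}_0$ (this is exactly where non-resonance is used and where the holomorphic part is legitimately discarded, consistently with the preceding lemma that $\widehat{\mathbb{M}}_{\widehat{\infty}}$ does not see it), and proving cleanly that $\partial_z - \lambda_i z^{-1}$ is a unit of $\mathcal{E}_0$ precisely when $\lambda_i=0$. That dichotomy is what drops the rank from $3$ to $2$ and is the structural reason the hypothesis ``one eigenvalue equal to $0$'' is imposed.
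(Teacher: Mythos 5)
Your proposal is correct and follows essentially the same route as the paper: part (i) by Theorem~\ref{thm:LocalFourier}~\eqref{thm:LocalFourier3} and Table~\ref{tab:slopes}, part (ii) by splitting $\mathbb{M}_0$ into three rank-one regular singular summands, microlocalizing, and observing that $\partial_z$ is a unit in $\mathcal{E}_0$ while $(\partial_z-\lambda_i z^{-1})^{-1}$ for $\lambda_i\neq 0$ acquires negative powers of $z$ and so falls outside $\mathcal{E}_0$, and part (iii) by pushing the relations $\partial_z q_i=\lambda_i z^{-1}q_i$ through the anti-involution~\eqref{eq:inv} to obtain the logarithmic form $\partial_{\hat{w}}q_i=(\lambda_i+1)\hat{w}^{-1}q_i$. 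The only cosmetic difference is that you route the vanishing of the contribution of $\mathbb{M}^{\mu}_{\infty}$ through Theorems~\ref{thm:LocalFourier} and~\ref{thm:StationaryPhase} rather than the direct microlocal computation the paper defers to Proposition~\ref{prop:muinf}, which is an equally valid justification.
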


\begin{rmrk}\label{rmrk:tensor}
    \begin{itemize}
        \item[i)] The condition of Lemma~\ref{prop:inf} on the residue at $0$ can be achieved from the general setup as well. Assume that $\Lambda_0$ is the residue of the connection matrix at $c=0$ (see \eqref{eq:locform}), and that the three eigenvalues of $\Lambda_0$ are $\lambda_1,\lambda_2,\lambda_3\in\mathbb{C}\setminus\{0\}$. Then consider the tensor product
            \[
            (E,\nabla)\otimes \left(L,\operatorname{d}-\lambda_1\frac{\operatorname{d}\! z}{z}\right),
            \]
            where $L$ is the trivial line bundle. The resulting connection satisfies the desired condition.
        %\item[ii)] The numerical data also confirm the statements of Lemma~\ref{prop:inf} $ii)$ and $iii)$. Since $0$ is a regular singular point, all the slopes at $0$ are $\leq 0$. More precisely, the slopes corresponding to the nonzero eigenvalues of the residue at $0$ are $0$, while its $0$ eigenvalue causes a drop in $\operatorname{Sw}(\mathbb{M}_0)$ by $1$. All in all, we formally find $\operatorname{Sw}(\mathbb{M}_0) = -1$. Theorem~\ref{thm:StationaryPhase} shows that $\operatorname{rk}(\mathcal{F}^{0,\widehat{\infty}}(\mathbb{M}))=2$ and $\operatorname{Sw}(\mathcal{F}^{0,\widehat{\infty}}(\mathbb{M})_{\widehat{\infty}})=-1$. Together with Lemma~\ref{prop:inf} $i)$, it implies that $\widehat{\mathbb{M}}$ has a regular singularity of rank $2$ at $\widehat{\infty}$.
        \item[ii)] See also~\cite{Sz_Nahm} for a derivation of the statement in the  Dolbeault complex structure.
        \item[iii)] The eigenvalues of the residue of the logarithmic singularity at $\widehat{\infty}$ in part iii) of the lemma can simultaneously be shifted by an integer $k$ up to taking tensor product by $\mathcal{O}(-k\cdot \{ \widehat{\infty}\})$. 
    \end{itemize}
\end{rmrk}

In order to describe the module $\widehat{\mathbb{M}}$ in more detail in the cases $JKTVI$, $JKTV$, and $JKTIVa$, we analyze them case by case in the following.

\begin{assumption}\label{assumption}
    We make the following assumptions on the leading order terms of \eqref{connection1}-\eqref{connection3}.
    \begin{itemize}
        \item In \eqref{connection1}:
        \begin{equation}\label{leading1}
            -A_{-2}=\begin{pmatrix}
                0 & 0 & 0 \\
                0 & 1 & 0 \\
                0 & 0 & t
            \end{pmatrix}
        \end{equation}
        for some $t\neq0,1$ (i.e. $a_0=0$, $a_1=1$, $a_2=t$).
        \item In \eqref{connection2}:
        \begin{equation}\label{leading2}
            -A_{-2}=\begin{pmatrix}
                0 & 1 & 0 \\
                0 & 0 & 0 \\
                0 & 0 & t
            \end{pmatrix}
        \end{equation}
        for some $t\neq0$ (i.e. $a_0=0$, $a_1=t$).
        \item In \eqref{connection3}:
        \begin{equation}\label{leading3}
            -A_{-2}=\begin{pmatrix}
                0 & 1 & 0 \\
                0 & 0 & 1 \\
                0 & 0 & 0
            \end{pmatrix}
        \end{equation}
        (i.e. $a_0=0$).
    \end{itemize}
    We can assume these leading order terms, up to rescaling by a multiplicative constant, with respect to some basis, or similarly as in Remark~\ref{rmrk:tensor}~$i)$, by taking tensor product of $\mathbb{M}$ by a $\mathcal{D}_X$-module of rank $1$ with an irregular singularity of slope $1$ at $\infty$ and no other poles. 
    
\end{assumption}

\begin{prop}\label{prop:muinf}
     In cases $JKTVI$, $JKTV$, $JKTIVa$, we have $\mathbb{M}_{\infty}^{\mu}=0$.  
\end{prop}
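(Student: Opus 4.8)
The statement is the counterpart at $\infty$ of Lemma~\ref{prop:inf}~ii), and I would establish it either structurally or by an explicit microlocal computation; both rely on the fact that in cases $JKTVI$, $JKTV$, $JKTIVa$ every slope of $\mathbb{M}_\infty$ is $\leq 1$.

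\emph{Structural route.} By the classical decomposition of $\widehat{\mathbb{M}}$ near $\widehat\infty$ into local Fourier transforms (Laumon, Malgrange), $\widehat{\mathbb{M}}_{\widehat\infty}$ is the direct sum, over $c\in C\cup\{\infty\}$, of the formalizations of $\mathcal{F}^{c,\widehat\infty}(\mathbb{M})$; comparing this term by term with the formal stationary phase isomorphism $\widehat{\mathbb{M}}_{\widehat\infty}\otimes_{\mathbb{C}[\hat w]\langle\partial_{\hat w}\rangle}\mathbb{C}\llbracket\hat w\rrbracket\langle\partial_{\hat w}\rangle\cong\bigoplus_c\mathbb{M}^\mu_c$ of Theorem~\ref{thm:Fsp} gives, for each $c$, $\mathbb{M}^\mu_c\cong\mathcal{F}^{c,\widehat\infty}(\mathbb{M})$. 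Taking $c=\infty$ and invoking Lemma~\ref{prop:inf}~i) (which holds unconditionally here, all slopes at $\infty$ being $\leq 1$), one concludes $\mathbb{M}^\mu_\infty\cong\mathcal{F}^{\infty,\widehat\infty}(\mathbb{M})=0$. Note that this does not use Lemma~\ref{prop:inf}~ii), so there is no circularity.

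\emph{Explicit route.} This is the version I would actually write out, since it reuses the presentation of $\mathbb{M}_\infty$ needed in the later case-by-case analysis. By Hukuhara--Levelt--Turrittin together with Assumption~\ref{assumption}, $\mathbb{M}_\infty$ splits formally into summands whose leading terms are read off from \eqref{leading1}--\eqref{leading3}: for $JKTVI$ three rank-one pieces, one regular singular and two of slope $1$ with exponential factors $\mathrm{e}^{1/w},\mathrm{e}^{t/w}$; for $JKTV$ a rank-two slope-$1/2$ piece together with a rank-one slope-$1$ piece; for $JKTIVa$ a single irreducible rank-three slope-$2/3$ piece. For each summand one picks a cyclic vector and, from the prescribed leading terms, computes the scalar operator it satisfies; rewriting it through $\partial_z=-w^2\partial_w$ and multiplying by a suitable power of $w$ to clear denominators, this operator takes the shape $P_s=\partial_z^{\,n}+(\text{lower order in }\partial_z\text{ with coefficients in }\mathbb{C}\llbracket w\rrbracket)$, where $n$ is the rank of the summand --- concretely $\partial_z+\alpha w+\cdots$, $\partial_z+c+\cdots$ with $c\in\{1,t\}$, $\partial_z^2-b_0w+\cdots$, and $\partial_z^3+b_0w+\cdots$ respectively. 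Since $\hat w^{-1}=\partial_z$ is a unit in $\mathcal{E}_\infty$ (exactly as in $\mathcal{E}_0$ in the proof of Lemma~\ref{prop:inf}~ii)) and $\mathcal{E}_\infty$ is complete for the order filtration, each $P_s=\hat w^{-n}\bigl(1+(\text{element of negative order})\bigr)$ is a unit of $\mathcal{E}_\infty$; hence $\mathbb{M}^\mu_\infty\cong\bigoplus_s\mathcal{E}_\infty/\mathcal{E}_\infty P_s=0$.

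The main work, and the place to be careful, is the case-by-case derivation of the operators $P_s$ in the ramified cases $JKTV$ (slope $1/2$) and $JKTIVa$ (slope $2/3$): one must expand $w^k\partial_w^{\,j}$ in powers of $\partial_z$, keep track of the negative power of $w$ that appears, and verify that after clearing it the leading coefficient in $\partial_z$ is a unit of $\mathbb{C}\llbracket w\rrbracket$ --- which is precisely the statement that the Newton polygon of $P_s$ at $\infty$ has a single slope $<1$, i.e. that the summand has slope $\leq 1$. One also uses the routine fact that a microdifferential operator with invertible principal symbol is a two-sided unit of $\mathcal{E}_\infty$, so that the cyclic quotients genuinely vanish; this, together with the invertibility of $\partial_z$, is the microlocal shadow of $\mathcal{F}^{\infty,\widehat\infty}(\mathbb{M})=0$.
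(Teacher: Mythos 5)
Your proposal is correct, and both of your routes appear in the paper: the structural route is exactly what the paper's own remark after the proposition says (the statement ``follows immediately from the numerical data'', i.e.\ from Theorem~\ref{thm:LocalFourier}~\eqref{thm:LocalFourier3} together with Lemma~\ref{prop:inf}~i)), and the paper deliberately sets it aside in favour of the microlocal computation, which is your explicit route. The difference is only in execution. The paper keeps the matrix presentation of $\mathbb{M}_\infty$ coming from \eqref{leading1}--\eqref{leading3} and inverts the operators directly in $\mathcal{E}_\infty[w^{-1}]$ --- for the scalar summands by expanding $(\partial_w - tw^{-2})^{-1}$ as a geometric series and checking that $\partial_w^{-1}=-\hat w w^2$ cancels the $w^{-2}$, and for the Jordan blocks of JKTV and JKTIVa by exhibiting an explicit preimage $(\varepsilon_1,\dots)^t$ of $(1,\dots,1)^t$ by back-substitution. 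You instead reduce each HLT summand to a cyclic scalar operator $P_s$ and invoke a unit criterion. Two cautions on your version. First, the ``routine fact that an operator with invertible principal symbol is a unit'' is false as stated for these rings: the whole point of Lemma~\ref{prop:inf}~ii) and of Proposition~\ref{prop:1} is that operators with invertible symbol $\hat w^{-1}$ can fail to be units when their lower-order coefficients acquire (infinitely many) negative powers of the local coordinate. You do hedge correctly by demanding coefficients in $\mathbb{C}\llbracket w\rrbracket$ and you identify the verification of this as the main work, so the argument is sound, but that hypothesis is the entire content, not a routine add-on. Second, note that the microlocalized module naturally lives over $\mathcal{E}_\infty[w^{-1}]$ rather than $\mathcal{E}_\infty$, since $\partial_w=-w^{-2}\hat w^{-1}$; this costs nothing for your argument (a unit of $\mathcal{E}_\infty$ is a unit of $\mathcal{E}_\infty[w^{-1}]$) but should be said. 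What the paper's matrix back-substitution buys is that the same presentation is reused verbatim in Proposition~\ref{prop:1} and in the later case-by-case determination of the irregular types of $\widehat{\mathbb{M}}_{\widehat\infty}$, which a cyclic-vector reduction would obscure.
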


\begin{rmrk}
    Although this statement follows immediately from the numerical data, see Theorem~\ref{thm:LocalFourier}~\eqref{thm:LocalFourier3} and Lemma~\ref{prop:inf} $i)$, we prove it here by the formal microlocalization technique, which could be considered as an alternative argument in these cases for Theorem~\ref{thm:LocalFourier}~\eqref{thm:LocalFourier3}. 
    The proof will be given under Assumption~\ref{assumption}, but the same proof goes through without it too. 
\end{rmrk}

\begin{proof}
    The steps of the proof are the same as in Lemma~\ref{prop:inf} $ii)$. In case $JKTVI$, we have
    \begin{gather*}
                \mathbb{M}_{\infty}
                \cong \mathbb{C} \llbracket w \rrbracket \langle \partial_w \rangle q_1/(\partial_w)\oplus \mathbb{C} \llbracket w \rrbracket \langle \partial_w \rangle q_2/(\partial_w-w^{-2})\oplus \mathbb{C} \llbracket w \rrbracket \langle \partial_w \rangle q_3/(\partial_w-tw^{-2}),
            \end{gather*}
    according to \eqref{leading1}. To obtain the microlocalized module $\mathbb{M}_{\infty}^{\mu}$, we take the tensor product with $\mathcal{E}_{\infty}$ from \eqref{eq:micro2}:
    \[
    \mathcal{E}_{\infty}\cong\mathbb{C}\llbracket z^{-1} \rrbracket\llbracket \hat{z}^{-1} \rrbracket [\hat{z}]\cong \mathbb{C}\llbracket w \rrbracket\llbracket \hat{w} \rrbracket [\hat{w}^{-1}],
    \]
    therefore,
    \[
    \mathcal{E}_{\infty}\otimes\mathbb{C} \llbracket w \rrbracket \langle \partial_w \rangle\cong\mathcal{E}_{\infty}[w^{-1}],
    \]
    because of the identity $\partial_w=-z^2\partial_z=-w^{-2}\hat{w}^{-1}$, that is, finitely many negative powers of $w$ can also appear. Now
    \begin{equation}\label{eq:microinfvi}
    \mathbb{M}_{\infty}^{\mu}
                \cong \mathcal{E}_{\infty}[w^{-1}] q_1/(\partial_w)\oplus \mathcal{E}_{\infty}[w^{-1}] q_2/(\partial_w-w^{-2})\oplus \mathcal{E}_{\infty}[w^{-1}] q_3/(\partial_w-tw^{-2}).
    \end{equation}
    Now $\partial_w^{-1}=-\hat{w}w^2$ formally, which is contained in $\mathcal{E}_{\infty}[w^{-1}]$, hence $\partial_w$ is a unit here, and the first direct summand is zero. For the latter two direct summands
    \[
    (\partial_w-tw^{-2})^{-1}=\sum_{k=0}^{\infty}t^k(\partial_w^{-1}w^{-2})^k\partial_w^{-1}=\sum_{k=0}^{\infty}t^k((-\hat{w}w^2)w^{-2})^k\partial_w^{-1}=\sum_{k=0}^{\infty}t^k(-\hat{w})^k\partial_w^{-1},
    \]
    which is in $\mathcal{E}_{\infty}[w^{-1}]$. Therefore, the latter two direct summands are also zero.
    The same argument can be repeated verbatim for $t$ replaced by $1$.

    In case $JKTV$ by \eqref{leading2}, we have
    \begin{gather}\label{eq:microinfv}
                \mathbb{M}_{\infty}
                \cong \left[\bigoplus_{i=1}^2\mathbb{C} \llbracket w \rrbracket \langle \partial_w \rangle q_i/\begin{pmatrix}
                    \partial_w & -1/w^2 \\
                    0 & \partial_w
                \end{pmatrix}\right]\oplus \mathbb{C} \llbracket w \rrbracket \langle \partial_w \rangle q_3/(\partial_w-tw^{-2}),
            \end{gather}
    meaning that the action on the first two direct summands is
    \[
    \begin{pmatrix}
                    \partial_w & -1/w^2 \\
                    0 & \partial_w
                \end{pmatrix} \begin{pmatrix}
                q_1 \\ q_2 \end{pmatrix}
                =\begin{pmatrix} \partial_wq_1-w^{-2}q_2\\ \partial_wq_2 \end{pmatrix}
    \]
    After the microlocalization, we get:
    \begin{gather*}
                \mathbb{M}_{\infty}^{\mu}
                \cong \left[\bigoplus_{i=1}^2\mathcal{E}_{\infty}[w^{-1}] q_i/\begin{pmatrix}
                    \partial_w & -1/w^2 \\
                    0 & \partial_w
                \end{pmatrix}\right]\oplus\mathcal{E}_{\infty}[w^{-1}] q_3/(\partial_w-tw^{-2}),
            \end{gather*}

    The third term is zero, as in the $JKTVI$ case. For the first two terms, the formal matrix acting on it is "invertible" in the sense that there exists $(\varepsilon_1,\varepsilon_2)^t\in\mathcal{E}_{\infty}[w^{-1}]\oplus\mathcal{E}_{\infty}[w^{-1}]$, such that
    \[
    \begin{pmatrix}
                    \partial_w & -1/w^2 \\
                    0 & \partial_w
                \end{pmatrix} \begin{pmatrix} \varepsilon_1\\ \varepsilon_2 \end{pmatrix} = \begin{pmatrix} 1\\ 1\end{pmatrix},
    \]
   equivalently
    \[
    \begin{pmatrix} \varepsilon_1\\ \varepsilon_2 \end{pmatrix} = \begin{pmatrix} \partial_w^{-1}(1+w^{-2}\partial_w^{-1}) \\ \partial_w^{-1} \end{pmatrix}.
    \]
     Here $(1,1)^t$ generates $\mathcal{E}_{\infty}[w^{-1}]\oplus\mathcal{E}_{\infty}[w^{-1}]$. Now $\partial_w^{-1}$ and $w^{-2}$ are in $\mathcal{E}_{\infty}[w^{-1}]$, therefore $(\varepsilon_1,\varepsilon_2)^t \in \mathcal{E}_{\infty}[w^{-1}]\oplus\mathcal{E}_{\infty}[w^{-1}]$, hence the first direct summand of $\mathbb{M}_{\infty}^{\mu}$ is also zero.

    In case $JKTIVa$, very similarly, using \eqref{leading3}:
    \begin{gather*}
                \mathbb{M}_{\infty}^{\mu}
                \cong \bigoplus_{i=1}^3\mathcal{E}_{\infty}[w^{-1}] q_i/\begin{pmatrix}
                    \partial_w & -1/w^2 & 0 \\
                    0 & \partial_w & -1/w^2 \\
                    0 & 0 & \partial_w
                \end{pmatrix}=0,
            \end{gather*}
    where the last identity is true because for the vector
    \[
    \begin{pmatrix} \varepsilon_1 \\ \varepsilon_2 \\ \varepsilon_3 \end{pmatrix} = \begin{pmatrix} \partial_w^{-1}(1+w^{-2}\partial_w^{-1}+(w^{-2}\partial_w^{-1})^2) \\ \partial_w^{-1}(1+w^{-2}\partial_w^{-1})\\ \partial_w^{-1} \end{pmatrix} \in\mathcal{E}_{\infty}[w^{-1}]^{\oplus3},
    \]
    the action of the connection matrix on $(\varepsilon_1,\varepsilon_2,\varepsilon_3)^t$ takes it to $(1,1,1)^t$, which generates the whole ring.
    
\end{proof}

\subsubsection{The $JKTVI$ case}\label{sec:dR_P6}

Let us assume that $\mathbb{M}$ has 
\begin{itemize}
    \item a regular singular point at $0$ with non-resonant eigenvalues of the residue, as assumed in Lemma~\ref{prop:inf}, and 
    \item an irregular singularity at $z= \infty$ where the irregular type has a pole of order $1$ whose (leading) degree $-1$ matrix $-A_{-2}$ has three different eigenvalues, and one of them is $0$, see \eqref{leading1}. 
\end{itemize}

As a consequence of Lemma~\ref{prop:inf}, $\widehat{\mathbb{M}}$ has a regular singularity at $\widehat{\infty}$ and $\operatorname{rk}(\widehat{\mathbb{M}}_{\widehat{\infty}})=2$. Then, the only irregular types that appear in the HLT-decomposition are $q = w^{-1}$ and $q = t w^{-1}$. Theorem~\ref{thm:StationaryPhase} implies that $\hat{0},-\hat{1},-\hat{t}$ are regular singular points of $\widehat{\mathbb{M}}$, and the residue matrices at these points have rank $1$. Also, Theorem~\ref{thm:StationaryPhase} shows that no more singularity appears. In summary, applying Fourier--Laplace transformation, we get $\operatorname{rk}(\widehat{\mathbb{M}})=2$ with regular singularities at $\hat{0},-\hat{1},-\hat{t},\widehat{\infty}$. This is the JMU representation of the Painlev\'e VI equation.

Actually, all the slopes at $\infty$ are $\leq 1$, with one slope being $<1$, because of the zero eigenvalue of $-A_{-2}$. This assumption is necessary because otherwise all the slopes at $\infty$ would be $\geq 1$, and Theorem~\ref{thm:LocalFourier}~\ref{thm:LocalFourier2} would imply $\mathcal{F}^{\infty,\hat{0}}(\mathbb{M}) = 0$.

The arrangement of the regular singularities at $\hat{0},-\hat{1},-\hat{t}$ can also be obtained from the microlocalized module $\mathbb{M}_{\infty}^{\mu}$. In the third direct summand of \eqref{eq:microinfvi} it is $\partial_wq_3=tw^{-2}q_3$. Multiplying by $w^2$:
    \[
    tq_3=w^2\partial_wq_3=-\partial_zq_3=-\hat{z}q_3.
    \]
    Now multiplying by $\partial_{\hat{z}}$:
    \[
    t\partial_{\hat{z}}q_3=-q_3-\hat{z}\partial_{\hat{z}}q_3,
    \]
    that is $\partial_{\hat{z}}q_3=-(\hat{z}+t)^{-1}q_3$. We see that there is a rank one regular singularity at $\hat{z}=-\hat{t}$. The argument is the same for $q_1,q_2$ and $\hat{z}=\hat{0},-\hat{1}$.

\subsubsection{The $JKTV$ case}\label{sec:dR_P5}

In this case, $\mathbb{M}$ has 
\begin{itemize}
    \item a regular singular point at $0$ with non-resonant eigenvalues of the residue, with one of them being zero, as assumed in Lemma~\ref{prop:inf}, and 
    \item an irregular singularity at $z= \infty$ where the irregular type has a pole of order $1$ whose (leading) degree $-1$ matrix $-A_{-2}$ is regular with two different eigenvalues: $0$ with multiplicity $2$ and another eigenvalue (say, $t$) with multiplicity $1$, see \eqref{leading2}. 
\end{itemize}

Again, Lemma~\ref{prop:inf} shows that $\widehat{\mathbb{M}}$ has a regular singularity at $\widehat{\infty}$, and $\operatorname{rk}(\widehat{\mathbb{M}}_{\widehat{\infty}})=2$. The only irregular type appearing in the HLT-decomposition is $q = t w^{-1}$. Theorem~\ref{thm:StationaryPhase} implies that $\hat{t}$ (we can choose $t=1$) is a regular singular point of $\widehat{\mathbb{M}}$, and the residue matrices at these points have rank $1$. Also, Theorem~\ref{thm:StationaryPhase} shows that $\hat{0}$ is the only possible singularity remaining. Indeed, Theorem~\ref{thm:LocalFourier}~\eqref{thm:LocalFourier2} shows that $\hat{z}=\hat{0}$ is an irregular singular point of Swan conductor $1$. In order to analyze $\widehat{\mathbb{M}}_{\hat{0}}=\mathcal{F}^{(\infty,\hat{0})}(\mathbb{M}_{\infty})$, consider $\mathbb{M}_{\infty}^{\mu}$.

As in Section~\ref{sec:dR_P6}, $\partial_wq_3=tw^{-1}q_3$ from \eqref{eq:microinfv} implies the regular singularity of rank one at $\hat{z}=\hat{t}$. Consider the other two relations from \eqref{eq:microinfv} under the same computation as in Section~\ref{sec:dR_P6}:
\begin{equation*}
    \begin{cases}
        \partial_wq_1=w^{-2}q_2 \\
        \partial_wq_2=0
    \end{cases}\hspace{0.5cm}\leftrightarrow\hspace{0.5cm} \begin{cases}
        -q_1-\hat{z}\partial_{\hat{z}}q_1=\partial_{\hat{z}}q_2 \\
        \partial_{\hat{z}}q_2=-\hat{z}^{-1}q_2
    \end{cases}
\end{equation*}

Substituting the second equation into the first one, we receive
\[
\partial_{\hat{z}}q_1=\hat{z}^{-2}q_2-\hat{z}^{-1}q_1
\]

We see that $\widehat{\mathbb{M}}_{\hat{0}}$ has the connection matrix with polar part
\[
    \left[\begin{pmatrix}
        0 & 1 \\
        0 & 0
    \end{pmatrix}
    \hat{z}^{-2} + \begin{pmatrix}
        -1 & 0 \\
        0 & -1
    \end{pmatrix} \hat{z}^{-1}\right]\otimes\operatorname{d}\hat{z}
\]
All in all, application of Fourier--Laplace transformation results $\operatorname{rk}(\widehat{\mathbb{M}})=2$, in the JMU representation of the degenerate Painlev\'e V equation. 

\subsubsection{The $JKTIVa$ case}

Here, the structure of $\mathbb{M}$:
\begin{itemize}
    \item at $z=0$ is the same as in Section~\ref{sec:dR_P6}-\ref{sec:dR_P5}, and
    \item at $z= \infty$ the irregular type has a nilpotent degree $-1$ leading term in $Q$, see \eqref{leading3}.
\end{itemize} 

Again, $\widehat{\mathbb{M}}$ is of rank $2$, based on Theorem~\ref{thm:StationaryPhase}, we can describe the set of singular points of $\widehat{\mathbb{M}}$. There is no more singularity, but $\hat{0}$ and $\widehat{\infty}$ (all slopes of $\mathbb{M}_{\infty}$ are $<1$, see Table~\ref{tab:slopes}). Now $\widehat{\mathbb{M}}_{\widehat{\infty}}$ is described by Lemma~\ref{prop:inf}: it has a regular singularity at $\widehat{\infty}$.

By virtue of Theorem~\ref{thm:LocalFourier}~\eqref{thm:LocalFourier2}, we have 
\[
 \operatorname{Sw}(\widehat{\mathbb{M}}_{\hat{0}}) = \frac 23 \cdot 3 = 2, \quad \operatorname{rk}(\widehat{\mathbb{M}}_{\hat{0}}) = 3 - 2 = 1,
\]
which shows that in $\hat{z}=\hat{0}$ we have a rank one, order three pole. 

Taking everything together, we have $\operatorname{rk}(\widehat{\mathbb{M}})=2$ and the connection matrix of $\widehat{\mathbb{M}}$ has a regular singularity at $\widehat{\infty}$ and a pole of order $3$ at $\hat{0}$, but only one eigenvalue of the irregular part is nonzero. This is therefore a special case of Painlev\'e IV.

\subsection{Cases with a pole of order $3$ at infinity}

\begin{assumption}
    We make the following assumptions on the irregular terms of \eqref{connection4}-\eqref{connection6}.
    \begin{itemize}
        \item In \eqref{connection4}:
        \begin{equation}\label{leading4}
            -2A_{-3}=\begin{pmatrix}
                0 & 0 & 0 \\
                0 & t_1 & 0 \\
                0 & 0 & t_2
            \end{pmatrix},\hspace{0.5cm} -A_{-2}=
            \begin{pmatrix}
                0 & 0 & 0 \\
                0 & b_1 & 0 \\
                0 & 0 & b_2
            \end{pmatrix}
        \end{equation}
        for some different $t_1,t_2\neq0$, that is $a_0=0$, $a_1=t_1$, $a_2=t_2$ $b_0=0$.
        \item In \eqref{connection5}:
        \begin{equation}\label{leading5}
            -2A_{-3} = \begin{pmatrix}
        0 & 1 & 0 \\
        0 & 0 & 0 \\
        0 & 0 & t 
    \end{pmatrix}, 
    \quad
    -A_{-2} = \begin{pmatrix}
        0 & 0 & 0 \\
        b & 0 & 0 \\
        0 & 0 & 0
    \end{pmatrix}
        \end{equation}
        for some $t,b\neq0$ (i.e. $a_0=0$, $a_1=t$, $b_1,b_2=0$, $b_0=b$).
        \item In \eqref{connection6}:
        \begin{equation}\label{leading6}
            -2A_{-3} = \begin{pmatrix}
        0 & 1 & 0 \\
        0 & 0 & 1 \\
        0 & 0 & 0
    \end{pmatrix}, 
    \quad
    -A_{-2} = \begin{pmatrix}
        0 & 0 & 0 \\
        0 & 0 & 0 \\
        b & 0 & 0
    \end{pmatrix}
        \end{equation}
        for some $b\neq 0$ (i.e. $a_0=0$, $b_1,b_2=0$, $b_0=b$).
    \end{itemize}
    The same argument as in Assumption~\ref{assumption} shows that these are natural choices for the parameters. 
    The conditions $b\neq 0$ are needed to ensure that the slopes are indeed $3/2$ and $5/3$ and not smaller. 
\end{assumption}

\begin{prop}\label{prop:1}
    In cases $JKTIVb$, $JKTII$ and $JKTI$: $\operatorname{rk}(\mathbb{M}_{\infty}^{\mu})=2$.
\end{prop}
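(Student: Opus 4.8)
The plan is to run the same formal-microlocalization argument used in the proof of Proposition~\ref{prop:muinf} and of part~ii) of Lemma~\ref{prop:inf}. First I would use that $\mathbb{M}_\infty^\mu$ is insensitive to the slope-$\leq 1$ part of $\mathbb{M}_\infty$: every pure constituent of slope $\leq 1$ microlocalizes to $0$ at $\infty$ — this is exactly what the computations in the proof of Proposition~\ref{prop:muinf} establish summand by summand, and it is also the content of Theorem~\ref{thm:LocalFourier}~\eqref{thm:LocalFourier3}. Combining Theorem~\ref{thm:Fsp} with the compatibility of formal microlocalization and Laumon's local Fourier transform, $\mathbb{M}_\infty^\mu$ is the $\mathbb{C}\llbracket\hat w\rrbracket\langle\partial_{\hat w}\rangle$-completion of $\mathcal{F}^{\infty,\widehat\infty}(\mathbb{M})$, so by exactness of $\mathcal{F}^{\infty,\widehat\infty}$ and Theorem~\ref{thm:LocalFourier}~\eqref{thm:LocalFourier3} applied to each pure constituent,
\[
\operatorname{rk}(\mathbb{M}_\infty^\mu)=\operatorname{rk}\bigl(\mathcal{F}^{\infty,\widehat\infty}(\mathbb{M}_\infty^{>1})\bigr)=\operatorname{Sw}(\mathbb{M}_\infty^{>1})-\operatorname{rk}(\mathbb{M}_\infty^{>1}).
\]
Thus the task reduces to reading off the Hukuhara--Levelt--Turrittin slope data of $\mathbb{M}_\infty$ in each of the three cases under the relevant Assumption.

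Next I would compute that slope data, keeping in mind that under the Assumptions it need not match the generic data of Table~\ref{tab:slopes}. In case $JKTIVb$, Assumption~\eqref{leading4} has $a_0=b_0=0$, so the constituent attached to the vanishing eigenvalue of $-2A_{-3}$ is regular of rank $1$, while the two nonzero distinct eigenvalues $t_1,t_2$ yield two unramified rank-$1$ constituents of slope $2$; hence $\mathbb{M}_\infty^{>1}$ has rank $2$ and $\operatorname{Sw}(\mathbb{M}_\infty^{>1})=2+2=4$. In case $JKTII$, Assumption~\eqref{leading5} gives an unramified rank-$1$ constituent of slope $2$ (the third basis vector, leading term $tw^{-3}$ with $t\neq0$) together with a rank-$2$ block whose polar matrix has characteristic polynomial $\xi^2-bw^{-5}+\cdots$, hence eigenvalues $\pm\sqrt b\,w^{-5/2}+\cdots$: a single ramified constituent of slope $3/2$ and ramification index $2$; so $\mathbb{M}_\infty^{>1}=\mathbb{M}_\infty$ has rank $3$ and $\operatorname{Sw}(\mathbb{M}_\infty)=2+2\cdot\tfrac32=5$. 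In case $JKTI$, Assumption~\eqref{leading6} gives a single rank-$3$ block whose polar matrix has characteristic polynomial $\xi^3-bw^{-8}+\cdots$, hence eigenvalues $\zeta\,b^{1/3}w^{-8/3}+\cdots$ with $\zeta^3=1$: a single ramified constituent of slope $5/3$ and ramification index $3$; so $\operatorname{rk}(\mathbb{M}_\infty)=3$ and $\operatorname{Sw}(\mathbb{M}_\infty)=3\cdot\tfrac53=5$. In all three cases $\operatorname{Sw}(\mathbb{M}_\infty^{>1})-\operatorname{rk}(\mathbb{M}_\infty^{>1})=2$, as claimed. (The hypotheses $t_i\neq0$ and $b\neq0$ are exactly what prevents these slopes from dropping.)

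In the spirit of Section~\ref{sec:microloc}, for the $JKTIVb$ case it is natural to also verify the count by an explicit microlocalization as in Proposition~\ref{prop:muinf}, rather than citing the rank formula. One writes $\mathbb{M}_\infty\cong L_0\oplus L_1\oplus L_2$ with $L_0=\mathbb{C}\llbracket w\rrbracket\langle\partial_w\rangle/(\partial_w+c_0w^{-1}+O(1))$ and $L_i=\mathbb{C}\llbracket w\rrbracket\langle\partial_w\rangle/(\partial_w+t_iw^{-3}+O(w^{-2}))$, $i=1,2$, and tensors with $\mathcal{E}_\infty[w^{-1}]$. Since $\partial_w^{-1}=-w^2\hat w\in\mathcal{E}_\infty$, the operator defining $L_0$ factors as $\partial_w\bigl(1+\partial_w^{-1}(c_0w^{-1}+O(1))\bigr)$ with invertible second factor (its correction term has strictly positive order in $\hat w$ and contributes only nonnegative powers of $w$), so $L_0^\mu=0$; for $L_i$ one computes $\partial_w^{-1}\,t_iw^{-3}=-t_iw^{-1}\hat w+O(\hat w^2)$, so the geometric series for the inverse of $1+\partial_w^{-1}(t_iw^{-3}+\cdots)$ contains unboundedly negative powers of $w$ and does not lie in $\mathcal{E}_\infty[w^{-1}]$; hence the defining operator is not a unit, $L_i^\mu\neq0$, and it has rank $\operatorname{Sw}(L_i)-\operatorname{rk}(L_i)=1$, giving $\operatorname{rk}(\mathbb{M}_\infty^\mu)=2$ directly. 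For $JKTII$ and $JKTI$ the same computation goes through over the ramified microdifferential ring attached to the coordinate $u$ with $u^2=w$, resp.\ $u^3=w$, but this is considerably more bookkeeping, and I would simply rely on the numerical argument of the first two paragraphs there.

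The step I expect to be the main obstacle is pinning down the slope and ramification data under the Assumptions. Two points need care: that in $JKTIVb$ the choice $a_0=b_0=0$ genuinely produces a slope-$0$ constituent, so that the entry "$2,2,2$" of Table~\ref{tab:slopes} is replaced by "$0,2,2$"; and the Newton-polygon computation for the nilpotent-leading-term blocks of $JKTII$ and $JKTI$, i.e.\ that the subleading terms $bw^{-2}$ really raise the slopes to $3/2$ and $5/3$ and that the corresponding constituents are ramified of index $2$ and $3$. Everything else is either the quoted numerical formula or a verbatim repetition of the $\mathcal{E}_\infty$-computations from Proposition~\ref{prop:muinf}.
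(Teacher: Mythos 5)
Your proposal is correct, but it takes a genuinely different route from the paper's proof of Proposition~\ref{prop:1}. The paper works directly inside $\mathcal{E}_\infty[w^{-1}]$: it shows the defining operators (e.g.\ $\partial_w-t_iw^{-3}-b_iw^{-2}$, and the $2\times2$, $3\times3$ matrix operators in \eqref{eq:microinfii}, \eqref{eq:microinfi}) are not invertible there, because commuting the $w^{-3}$ term past $\partial_w^{-1}=-\hat w w^2$ leaves an uncancelled $w^{-1}$ and hence unboundedly negative powers of $w$; it then pins the rank down to exactly $2$ in the ramified cases by exhibiting explicit $\mathbb{C}[\hat w^{-1}]$-linear relations among the generators ($-\hat w^{-1}q_2=bq_1$, resp.\ $-\hat w^{-1}q_3=bq_1$) that eliminate one generator each. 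You instead reduce to the numerical identity $\operatorname{rk}(\mathbb{M}_\infty^\mu)=\operatorname{Sw}(\mathbb{M}_\infty^{>1})-\operatorname{rk}(\mathbb{M}_\infty^{>1})$ together with the HLT slope data, which you compute correctly (slopes $0,2,2$, resp.\ $\tfrac32,\tfrac32,2$, resp.\ $\tfrac53,\tfrac53,\tfrac53$, with the right ramification indices); these are exactly the cross-checks the paper itself performs in the later subsections ($2\cdot\tfrac32+2=5$, $5-3=2$, etc.), and the paper's own remark after Proposition~\ref{prop:muinf} acknowledges that such a numerical argument is available in parallel to the microlocal one. What your route buys is a cleaner justification of the exact value $2$ — the ``rank drops by one'' step is the delicate point of the paper's argument — at the cost of (a) relying on the identification of $\mathbb{M}_\infty^\mu$ with the formalization of $\mathcal{F}^{\infty,\widehat\infty}(\mathbb{M})$, which the paper uses only implicitly via Theorem~\ref{thm:Fsp} and the reference to Garc\'{\i}a-L\'opez, and (b) not producing the explicit transformed relations \eqref{comp2} and \eqref{comp5}, which the paper extracts from the same microlocal computation and needs afterwards to write down the irregular types of $\widehat{\mathbb{M}}_{\widehat\infty}$ in the $JKTII$ and $JKTI$ cases. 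So as a proof of the rank statement alone your argument is sound and arguably tighter, but it defers work that the paper's proof does once and reuses.
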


\begin{proof}
The microlocalized modules $\mathbb{M}_{\infty}^{\mu}$ are established in the same way as in Proposition~\ref{prop:muinf}. Therefore, in case $JKTIVb$, we have
\begin{equation}\label{eq:microinfivb}
    \mathbb{M}_{\infty}^{\mu}
                \cong \mathcal{E}_{\infty}[w^{-1}] q_1/(\partial_w)\oplus \mathcal{E}_{\infty}[w^{-1}] q_2/(\partial_w-t_1w^{-3}-b_1w^{-2})\oplus \mathcal{E}_{\infty}[w^{-1}] q_3/(\partial_w-t_2w^{-3}-b_2w^{-2}),
    \end{equation}
    where the relations come from the local form of the leading order term \eqref{leading4}. Recall that
    \begin{equation*}
    \mathcal{E}_{\infty}[w^{-1}]\cong\mathbb{C}\llbracket w \rrbracket\llbracket \hat{w} \rrbracket [\hat{w}^{-1}][w^{-1}],
    \end{equation*}
    therefore, $\partial_w^{-1}=-\hat{w}w^2\in\mathcal{E}_{\infty}[w^{-1}]$, and the first direct summand of \eqref{eq:microinfivb} is zero. On the other hand (for $i=1,2$),
    \begin{gather*}
    (\partial_w-t_iw^{-3}-b_iw^{-2})^{-1}=\sum_{k=1}^{\infty}(\partial_w^{-1}(t_iw^{-3}+b_iw^{-2}))^k\partial_w^{-1}= \\ \sum_{k=0}^{\infty}((-\hat{w}w^2)(t_iw^{-3}+b_iw^{-2}))^k\partial_w^{-1}=
    \sum_{k=0}^{\infty}(-t_i\hat{w}w^{-1}-b_i\hat{w})^{k}\partial_w^{-1}.
    \end{gather*}
    The homogeneous terms of degree $-m$ in $w$ all come from the series 
    \[
    \sum_{k=m}^{\infty} \binom km (-t_i)^m (-b_i)^{k-m} (\hat{w}w^{-1})^m \hat{w}^{k-m} \partial_w^{-1}.
    \]
    Using the commutator relation $[\hat{w},w] = - \hat{w}^2$,  we can move all factors $w^{-1}$ to the left. 
    To move the first occurrence of $w^{-1}$, we need to commute it with just one $\hat{w}$, for the second one we need to commute it with two $\hat{w}$'s, and so on, until its $m$'th occurrence that requires to be commuted with $m$ $\hat{w}$'s. 
    All in all, we need $m(m+1)/2$ commutators. 
    This implies that coefficient of $w^{-m}$ in the above formal power series is 
    \[
        (-1)^{\frac{m(m+1)}2} \hat{w}^{m(m+1)} \sum_{k=m}^{\infty} \binom km (-t_i)^m (-b_i)^{k-m} \hat{w}^k \partial_w^{-1}. 
    \]
    By assumption, $t_i\neq 0 \neq b_i$. 
    Therefore, infinitely many negative powers of $w$ appear with nonzero coefficient, hence it cannot be contained in $\mathcal{E}_{\infty}[w^{-1}]$. The key difference between this and Proposition~\ref{prop:muinf} is that here $w$ has power $-3$, not $-2$, which is not canceled in the formula. This shows that the latter two direct summands in \eqref{eq:microinfivb} are nonzero, and $\mathbb{M}_{\infty}^{\mu}$ is of rank 2.

    In the case $JKTII$, by \eqref{leading5}, we have
    \begin{gather}\label{eq:microinfii}
                \mathbb{M}_{\infty}^{\mu}
                \cong \left[\bigoplus_{i=1}^2\mathcal{E}_{\infty}[w^{-1}] q_i/\begin{pmatrix}
                    \partial_w & -1/w^3 \\
                    -b/w^2 & \partial_w
                \end{pmatrix}\right]\oplus \mathcal{E}_{\infty}[w^{-1}] q_3/(\partial_w-tw^{-3}),
            \end{gather}
    As in the previous case, the second direct summand is nonzero and has rank one. For the first direct summand, we are looking for an $(\varepsilon_1,\varepsilon_2)^t$ such that 
    \[
    \begin{pmatrix}
                    \partial_w & -1/w^3 \\
                    -b/w^2 & \partial_w
                \end{pmatrix} \begin{pmatrix}
                    \varepsilon_1 \\ \varepsilon_2
                \end{pmatrix}= \begin{pmatrix} 1 \\ 1 \end{pmatrix}.
    \]
    After computation, we get
    \[
    \begin{pmatrix}
                    \varepsilon_1 \\ \varepsilon_2
                \end{pmatrix}
                =\begin{pmatrix} (\partial_w-bw^{-3}\partial_w^{-1}w^{-2})^{-1}(1+w^{-2}) \\ bw^{-2}(\partial_w-bw^{-3}\partial_w^{-1}w^{-2})^{-1}(1+w^{-2}))\end{pmatrix}
    \]
    which is not in $\mathcal{E}_{\infty}[w^{-1}]\oplus\mathcal{E}_{\infty}[w^{-1}]$, because
    \[
    (\partial_w-bw^{-3}\partial_w^{-1}w^{-2})^{-1}=\sum_{k=1}^{\infty}b^k(\partial_w^{-1}w^{-3}\partial_w^{-1}w^{-2})^k\partial_w^{-1}=\sum_{k=1}^{\infty}b^k(\hat{w}w^{-1}\hat{w})^k\partial_w^{-1},
    \]
    where infinitely many negative powers of $w$ appear, just as in case IVb above. 
    This means that the rank of the first direct summand in \eqref{eq:microinfii} is nonzero, the question is whether it is one or two. Consider the relations and their formal transformations in the microlocalized module, via the Fourier--Laplace anti-involution \eqref{eq:inv}:
    \begin{equation}\label{comp1}
        \begin{cases}
            \partial_wq_1=w^{-3}q_2 \\
            \partial_wq_2=bw^{-2}q_1
        \end{cases}\quad\leftrightarrow \begin{cases}
            -\hat{w}^{-1}q_1=\hat{w}^2\partial_{\hat{w}}q_2 \\
            -\hat{w}^{-1}q_2=bq_1
        \end{cases}
    \end{equation}
    Here, the second equation, after the transformation, establishes a $\mathbb{C}[\hat{w}^{-1}]$-linear relation between the generators $q_1$ and $q_2$. Therefore, the rank of the first direct summand of \eqref{eq:microinfii} drops by one, and the total rank of $\mathbb{M}_{\infty}^{\mu}$ turns out to be 2.

    Moreover, 
    \begin{equation}\label{comp2}
        \partial_{\hat{w}}q_2=-\hat{w}^{-3}q_1=b^{-1}\hat{w}^{-4}q_2
    \end{equation}

    In the case $JKTI$, the process is very similar. Consider the following microlocalized module (based on \eqref{leading6}):
    \begin{gather}\label{eq:microinfi}
                \mathbb{M}_{\infty}^{\mu}
                \cong \bigoplus_{i=1}^3\mathcal{E}_{\infty}[w^{-1}] q_i/\begin{pmatrix}
                    \partial_w & -1/w^3 & 0 \\
                    0 & \partial_w & -1/w^3 \\
                    -b/w^{2} & 0 & \partial_w
                \end{pmatrix},
            \end{gather}
    After similar tedious computation as in the previous case, we see that the $(\varepsilon_1,\varepsilon_2,\varepsilon_3)^t$ which is taken to $(1,1,1)^t$ by the action of the above matrix, is not contained in $\mathcal{E}_{\infty}[w^{-1}]^{\oplus 3}$. Therefore, the rank of \eqref{eq:microinfi} is nonzero. On the other hand, consider the formal transformations of the relations in the microlocalized module:
    \begin{equation}\label{comp4}
        \begin{cases}
            \partial_wq_1=w^{-3}q_2 \\
            \partial_wq_2=w^{-3}q_3 \\
            \partial_wq_3=bw^{-2}q_1
        \end{cases}\quad\leftrightarrow \begin{cases}
            -\hat{w}^{-1}q_1=\hat{w}^2\partial_{\hat{w}}q_2 \\
            -\hat{w}^{-1}q_2=\hat{w}^2\partial_{\hat{w}}q_3 \\
            -\hat{w}^{-1}q_3=bq_1
        \end{cases}
    \end{equation}
    The third equation shows again a $\mathbb{C}[\hat{w}^{-1}]$-linear relation between two generators of $\mathbb{M}_{\infty}^{\mu}$, hence the rank drops by one and becomes two. Furthermore,
    \begin{equation}\label{comp5}
    \begin{aligned}
        \partial_{\hat{w}}q_2 & =-\hat{w}^{-3}q_1=b^{-1}\hat{w}^{-4}q_3 \\
        \partial_{\hat{w}}q_3 & =-\hat{w}^{-3}q_2
    \end{aligned}
    \end{equation}

\end{proof}

\begin{prop}\label{prop:2}
    In case $JKTIVb$ there are two singular points of $\widehat{\mathbb{M}}$: $\hat{0}$ and $\widehat{\infty}$. In cases $JKTII$ and $JKTI$ the point $\widehat{\infty}$ is the only singularity of $\widehat{\mathbb{M}}$.
\end{prop}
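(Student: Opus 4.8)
The plan is to determine $\widehat{C}$ by combining Theorem~\ref{thm:StationaryPhase} with the rank computation of Proposition~\ref{prop:1}. In all three cases $\operatorname{Sing}(\mathbb{M}) = \{\infty\}$, since the divisor is $3\cdot\{\infty\}$, so Theorem~\ref{thm:StationaryPhase} applies with $C = \{\infty\}$. By Table~\ref{tab:slopes} the slopes of $\mathbb{M}_\infty$ are $2,2,2$ for $JKTIVb$, $3/2,2$ for $JKTII$ and $5/3$ for $JKTI$; in particular none of them equals $1$, so the HLT-decomposition of $\mathbb{M}_\infty$ has no summand $Q_j = q_j w^{-1} + o(w^{-1})$ with $q_j\in\mathbb{C}^*$. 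The last assertion of Theorem~\ref{thm:StationaryPhase} then forces $\widehat C\subseteq\{\hat 0,\widehat\infty\}$ in each case, and it remains only to decide which of these two points actually occurs.

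For $\widehat\infty$, the key point is that $C\cup\{\infty\} = \{\infty\}$, so the formal stationary phase isomorphism of Theorem~\ref{thm:Fsp} identifies the formal structure of $\widehat{\mathbb{M}}$ at $\widehat\infty$ with $\mathbb{M}_\infty^{\mu}$, which has rank $2$ by Proposition~\ref{prop:1}. Applying the Fourier--Laplace anti-involution~\eqref{eq:inv} to the defining relations of $\mathbb{M}_\infty^\mu$ turns them into relations with a pole of order $>1$ at $\widehat\infty$, as in~\eqref{comp2} and~\eqref{comp5} (and analogously in case $JKTIVb$, starting from~\eqref{eq:microinfivb}); hence $\widehat\infty$ is an irregular singular point of $\widehat{\mathbb{M}}$ in all three cases. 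One can also argue purely numerically, by splitting $\mathbb{M}_\infty$ into its pure-slope constituents and applying Theorem~\ref{thm:LocalFourier}~\eqref{thm:LocalFourier3} to each to obtain $\operatorname{Sw}(\widehat{\mathbb{M}}_{\widehat\infty}) > 0$.

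For $\hat 0$, since $\operatorname{Sing}(\mathbb{M})\subseteq\{\infty\}$, the only possible source of a singularity of $\widehat{\mathbb{M}}$ at $\hat 0$ is the slope-$<1$ part of $\mathbb{M}_\infty$, through the local transform $\mathcal{F}^{\infty,\hat 0}$. In cases $JKTII$ and $JKTI$ the module $\mathbb{M}_\infty$ is a direct sum of pure constituents, all of slope $>1$; by exactness --- hence additivity --- of $\mathcal{F}^{\infty,\hat 0}$ and by Theorem~\ref{thm:LocalFourier}~\eqref{thm:LocalFourier2}, this gives $\mathcal{F}^{\infty,\hat 0}(\mathbb{M}) = 0$, so $\widehat{\mathbb{M}}$ is $\mathcal{O}$-coherent near $\hat 0$ and $\widehat C = \{\widehat\infty\}$. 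In case $JKTIVb$, by contrast, the normalization~\eqref{leading4} produces a rank-one regular-singular (slope-$0$) summand $q_1$ of $\mathbb{M}_\infty$, namely the one with $\partial_w q_1 = c_0 w^{-1}q_1 + O(1)$; applying $\mathcal{F}^{\infty,\hat 0}$ to it yields, by Theorem~\ref{thm:LocalFourier}~\eqref{thm:LocalFourier2}, a rank-one regular singular module at $\hat 0$, and an explicit computation with~\eqref{eq:inv} --- parallel to part iii) of Lemma~\ref{prop:inf} --- shows that its residue equals $c_0-1$, which is nonintegral for generic parameters; hence $\hat 0$ is a genuine singular point and $\widehat C = \{\hat 0,\widehat\infty\}$.

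The delicate step is this dichotomy at $\hat 0$: one has to confirm that in cases $JKTII$ and $JKTI$ the full module $\mathbb{M}_\infty$ has no constituent of slope $\leq 1$ (so that the vanishing statement of Theorem~\ref{thm:LocalFourier}~\eqref{thm:LocalFourier2} applies to every pure summand), that in case $JKTIVb$ the normalization~\eqref{leading4} genuinely forces a slope-$0$ block, and that the rank-one contribution it creates at $\hat 0$ does not accidentally become holomorphic --- which is precisely where genericity of the residue parameters is used.
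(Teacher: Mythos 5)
Your proof is correct and follows essentially the same route as the paper: Theorem~\ref{thm:StationaryPhase} (absence of slope-$1$ exponential factors) confines $\widehat C$ to $\{\hat 0,\widehat\infty\}$, Theorem~\ref{thm:LocalFourier}~\eqref{thm:LocalFourier2} rules $\hat 0$ out for $JKTII$, $JKTI$ and in for $JKTIVb$ via the slope-$0$ block created by~\eqref{leading4}, and Proposition~\ref{prop:1} with Theorem~\ref{thm:Fsp} gives the singularity at $\widehat\infty$. The only addition beyond the paper's argument is your explicit check that the residue $c_0-1$ at $\hat 0$ is nonintegral for generic parameters (so the nonvanishing of $\mathcal{F}^{\infty,\hat 0}(\mathbb{M})$ really yields a singular point), a point the paper leaves implicit.
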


\begin{proof}
    In all three cases $\infty$ is the only singularity of $\mathbb{M}$. In the latter two cases, there is no summand of the form
    \[
    Q_j=q_jw^{-1}+o(w^{-1})
    \]
    in the HLT-decomposition of $\mathbb{M}_{\infty}$. Therefore in the $JKTI$, $JKTII$ cases Theorem~\ref{thm:StationaryPhase} excludes all possible singularities in $\widehat{\mathbb{M}}$, but $\hat{0}$ and $\widehat{\infty}$. On the other hand, Theorem~\ref{thm:LocalFourier}~\ref{thm:LocalFourier2} implies 
    \[
    \mathcal{F}^{\infty,\hat{0}}(\mathbb{M}) = 0; 
    \]
    because $\mathbb{M}_{\infty}$ only has slope $\lambda\geq 1$ summands. Hence $\hat{0}$ is not a singularity of $\widehat{\mathbb{M}}$. But $\widehat{\infty}$ is a singularity, as we have seen in Proposition~\ref{prop:1}, $\mathbb{M}_{\infty}^{\mu}$ is nonzero, which is isomorphic to $\widehat{\mathbb{M}}_{\widehat{\infty}}$ via Theorem~\ref{thm:Fsp}.

    The difference in case $JKTIVb$ is that the choice for one eigenvalue of $-2A_{-3}$ and $-A_{-2}$ is zero (which belongs to the same eigenspace) in \eqref{leading4}. This means that there is a slope $\lambda<1$ in $\mathbb{M}_{\infty}$, and $\mathcal{F}^{\infty,\hat{0}}(\mathbb{M}) \neq 0$. Then Theorem~\ref{thm:StationaryPhase} implies that $\hat{0}\in\widehat{\mathbb{M}}$ is a regular singularity with a residue matrix of rank 1. The same way as in the other two cases $\widehat{\infty}\in\widehat{\mathbb{M}}$ is also a singularity.
\end{proof}

\subsubsection{The $JKTIVb$ case}

The conditions on $\mathbb{M}$ are: 
\begin{itemize}
    \item a unique singularity at $z= \infty$ where the irregular type has a pole of order $2$ whose (leading) degree $-2$ matrix $-2A_{-3}$ is regular semisimple with one eigenvalue equal to $0$ and the next matrix $-A_{-2}$ also has a $0$ eigenvalue with the same eigenspace as $-2A_{-3}$ (see \eqref{leading4}). 
\end{itemize}

Then, as a consequence of Propositions~\ref{prop:1},\ref{prop:2} $\widehat{\mathbb{M}}$ has a regular singularity at $\hat{z} = \hat{0}$ with residue of rank $1$, and an irregular singularity at $\widehat{\infty}$. As we have seen in Proposition~\ref{prop:1}, $\operatorname{rk}(\mathbb{M}_{\infty}^{\mu})=2$. Similarly as in \eqref{comp1} and \eqref{comp4}, the formal transformation of the defining relations in the microlocalized module is:
\begin{equation}\label{eq:ivb}
\partial_wq_3=t_2w^{-3}q_3+b_2w^{-2}q_3 \quad \leftrightarrow \quad \partial_{\hat{w}}q_3=-t^{-1}\hat{w}^{-3}q_3-b_2t_2^{-1}\hat{w}^{-2},
\end{equation}
and similarly for $q_2$ and $t_1$. This implies by virtue of Theorem~\ref{thm:Fsp} that the polar part of the connection matrix belonging to $\widehat{\mathbb{M}}_{\widehat{\infty}}$ has irregular part

\[
\left[\begin{pmatrix}
    -t_1^{-1} & 0 \\
    0 & -t_2^{-1}
\end{pmatrix}\hat{w}^{-3}+\begin{pmatrix}
    -b_1t_1^{-1} & 0 \\
    0 & -b_2t_2^{-1}
\end{pmatrix}\hat{w}^{-2}\right]\otimes\operatorname{d}\! \hat{w}.
\]

The result of the Fourier--Laplace transformation is a rank two singularity at $\widehat{\infty}$ which is a pole of order three and a rank one singularity at $\hat{0}$ which is a pole of order one. All together, this is the JMU representation of the general case of Painlev\'e IV.

\begin{rmrk}
    The numerical data from Theorem~\ref{thm:LocalFourier} satisfy
    \[
        \operatorname{Sw}(\widehat{\mathbb{M}}_{\widehat{\infty}}) = \operatorname{Sw}(\mathbb{M}_{\infty}) = 4, \quad \operatorname{rk}(\widehat{\mathbb{M}}_{\widehat{\infty}}) = 4 - 2 = 2.
    \]
    We see that the Fourier--Laplace transformation just decreases the rank of the irregular singularity by $1$, although the exact irregular type cannot be concluded from the numerical data.
    
    If we choose generic eigenvalues for $-2A_{-3}$, then the Laplace transform will map it to a similar object, i.e. the set of generic $\operatorname{rk}=3, \operatorname{Sw}=6$ connections with $\infty$ as only singularity, is self-dual. 
\end{rmrk}

\subsubsection{The $JKTII$ case}

Here, the only (irregular) singularity of $\mathbb{M}$ being placed at $\infty$, and we choose $-2A_{-3}$ and $-A_{-2}$ as described in \eqref{leading5}. By Proposition~\ref{prop:2} $\widehat{\mathbb{M}}$ has only one singularity, which is placed at $\widehat{\infty}$. By Proposition~\ref{prop:2} and Theorem~\ref{thm:Fsp}, $\widehat{\mathbb{M}}_{\widehat{\infty}}$ is of rank two. The numerical data from Theorem~\ref{thm:LocalFourier}~\eqref{thm:LocalFourier3} shows 
\[
       \operatorname{Sw}(\widehat{\mathbb{M}}_{\widehat{\infty}}) = \operatorname{Sw}(\mathbb{M}_{\infty}) = 2 \cdot \frac 32 + 2 = 5, \quad \operatorname{rk}(\widehat{\mathbb{M}}_{\widehat{\infty}}) = 5 - 3 = 2.
\]
Thus, the singularity is a pole of order four. Computations \eqref{comp1}-\eqref{comp2} in $\mathbb{M}_{\infty}^{\mu}$ also verify this. Moreover, together with \eqref{eq:ivb} (which is the same here for $q_3$), these computations show that the connection matrix belonging to $\widehat{\mathbb{M}}_{\widehat{\infty}}$, has irregular part in the basis $\{q_2,q_3\}$:
\[
    \left[\begin{pmatrix}
        b^{-1} & 0 \\
        0 & 0
    \end{pmatrix}\hat{w}^{-4}+ \begin{pmatrix}
    0 & 0 \\
    0 & - t^{-1}
    \end{pmatrix}\hat{w}^{-3}+O(\hat{w}^{-2})\right]\otimes\operatorname{d}\hat{w}. 
\]
Formally, $\widehat{\mathbb{M}}_{\widehat{\infty}}$ decomposes into a sum of two rank $1$ modules of slopes $3$ and $2$, respectively. The only singularity is of rank 2, placed at $\widehat{\infty}$, which is a pole of order four with the above irregular part. This provides a special case of the JMU representation of Painlev\'e II, namely, one eigenvalue of the leading order term vanishes.

\begin{comment}
Then, by Theorem~\ref{thm:StationaryPhase}~\eqref{thm:StationaryPhase2} $\widehat{\mathbb{M}}$ has a regular singularity at $\hat{z} = q_2$. 
Moreover, by Theorems~\ref{thm:LocalFourier}~\eqref{thm:LocalFourier3} and~\ref{thm:StationaryPhase}~\eqref{thm:StationaryPhase1} it has an irregular singularity at $\hat{z} =\widehat{\infty}$ with 
\[
       \operatorname{Sw}(\widehat{\mathbb{M}}_{\widehat{\infty}}) = \operatorname{Sw}(\mathbb{M}_{\infty}) = 2 \cdot \frac 32 + 2 = 5, \quad \operatorname{rk}(\widehat{\mathbb{M}}_{\widehat{\infty}}) = 5 - 3 = 2.
\]
Formally, $\widehat{\mathbb{M}}_{\widehat{\infty}}$ decomposes into a sum of two rank $1$ modules, of slopes $3$ and $2$ respectively, so that 
\[
    \hat{Q} = \hat{A}_{-4} \hat{w}^{-3} + \hat{A}_{-3} \hat{w}^{-2} + \hat{A}_{-2} \hat{w}^{-1}
\]
with 
\[
    \hat{A}_{-4} = \begin{pmatrix}
        q_3^{-1} & 0 \\
        0 & 0
    \end{pmatrix}, 
    \quad 
    \hat{A}_{-3} = \begin{pmatrix}
    * & 0 \\
    0 & - q^{-1}
    \end{pmatrix}. 
\]
This gives a special case of the JMU representation of Painlev\'e II, namely one eigenvalue of $\hat{A}_{-4}$ vanishes. 
\end{comment} 

\subsubsection{The $JKTI$ case}

Finally, in this case, we have a single singularity at $\infty$, which is a pole of order three, with the coefficients of the irregular type given by \eqref{leading6}. As in the previous case, by Proposition~\ref{prop:2} $\widehat{\mathbb{M}}$ has only one singularity, which is placed at $\widehat{\infty}$. By Proposition~\ref{prop:2} and Theorem~\ref{thm:Fsp} $\widehat{\mathbb{M}}_{\widehat{\infty}}$ is of rank two. The numerical data from Theorem~\ref{thm:LocalFourier}~\eqref{thm:LocalFourier3} shows
\[
       \operatorname{Sw}(\widehat{\mathbb{M}}_{\widehat{\infty}}) = \operatorname{Sw}(\mathbb{M}_{\infty}) = 3 \cdot \frac 53 = 5, \quad \operatorname{rk}(\widehat{\mathbb{M}}_{\widehat{\infty}}) = 5 - 3 = 2. 
\]
This means that the singularity at $\widehat{\infty}$ is of rank two, and is a pole of order four. Again, \eqref{comp4}-\eqref{comp5} show the same, and according to \eqref{comp5}, the irregular type of the connection matrix in basis $\{q_2,q_3\}$ of $\widehat{\mathbb{M}}_{\widehat{\infty}}$ is 
\[
    \left[\begin{pmatrix}
        0 & b^{-1} \\
        0 & 0
    \end{pmatrix}\hat{w}^{-4}+ \begin{pmatrix}
    0 & 0 \\
    -1 & 0
    \end{pmatrix}\hat{w}^{-3}+O(\hat{w}^{-2})\right]\otimes\operatorname{d}\hat{w}. 
\]
This gives the JMU representation of Painlev\'e I.

\begin{comment}
\[
    \hat{Q} = \hat{A}_{-4} \hat{w}^{-3} + \hat{A}_{-3} \hat{w}^{-2} + \hat{A}_{-2} \hat{w}^{-1}
\]
with 
\[
    \hat{A}_{-4} = \begin{pmatrix}
        0 & 1 \\
        0 & 0
    \end{pmatrix}, 
    \quad 
    \hat{A}_{-3} = \begin{pmatrix}
    0 & 0 \\
    q_1^{-1} & 0
    \end{pmatrix}. 
\]
\end{comment}

\bigskip
\textbf{Funding}
\bigskip
\\ The project supported by the Doctoral Excellence Fellowship Programme (DCEP) is funded by the National Research Development and Innovation Fund of the Ministry of Culture and Innovation and the Budapest University of Technology and Economics, under a grant agreement with the National Research, Development and Innovation Office. During the preparation of this manuscript, the authors were supported by the grant K146401 of the National Research, Development and Innovation Office.
The second author was also supported by the grant KKP144148 of the National Research, Development and Innovation Office.

\end{document}